\documentclass[11pt,oneside,reqno]{amsart}
\usepackage{amssymb}
\usepackage{}
\usepackage{amsmath}
\usepackage{graphicx}
\usepackage{mathrsfs}
\usepackage{amsfonts}
\usepackage{enumerate,amsmath,amssymb,amsthm}
\usepackage{hyperref}
\hypersetup{backref,
colorlinks=true,
linkcolor=blue,
anchorcolor=blue,
citecolor=blue}

\newcommand\dela[1]{{\color{red}}}

\pagestyle{plain}
\textwidth=160 mm
\textheight=240 mm
\oddsidemargin=0mm
\topmargin=-12mm
\numberwithin{equation}{section}

\newcommand{\be}{\begin{eqnarray}}
\newcommand{\ee}{\end{eqnarray}}
\newcommand{\ce}{\begin{eqnarray*}}
\newcommand{\de}{\end{eqnarray*}}
\newtheorem{theorem}{Theorem}[section]
\newtheorem{lemma}[theorem]{Lemma}
\newtheorem{claim}[theorem]{Claim}
\newtheorem{remark}[theorem]{Remark}
\newtheorem{definition}[theorem]{Definition}
\newtheorem{proposition}[theorem]{Proposition}
\newtheorem{Examples}[theorem]{Example}
\newtheorem{corollary}[theorem]{Corollary}
\newtheorem{assumption}{Assumption}[section]

\def\[{{\Big[}}
\def\]{{\Big]}}
\def\<{{\langle}}
\def\>{{\rangle}}
\def\({{\Big(}}
\def\){{\Big)}}

\def\bx{{\mathbf{x}}}

\def\dif{{\mathord{{\rm d}}}}

\def\={&\!\!=\!\!&}
\def\bt{\begin{theorem}}
\def\et{\end{theorem}}
\def\bt{\begin{claim}}
\def\et{\end{claim}}
\def\bl{\begin{lemma}}
\def\el{\end{lemma}}
\def\br{\begin{remark}}
\def\er{\end{remark}}
\def\bas{\begin{assumption}}
\def\eas{\end{assumption}}
\def\bd{\begin{definition}}
\def\ed{\end{definition}}
\def\bp{\begin{proposition}}
\def\ep{\end{proposition}}
\def\bc{\begin{corollary}}
\def\ec{\end{corollary}}
\def\bx{\begin{Examples}}
\def\ex{\end{Examples}}

\def\cB{{\mathcal B}}

\def\cF{{\mathcal F}}

\def\mN{{\mathbb N}}

\def\geq{\geqslant}
\def\leq{\leqslant}

\allowdisplaybreaks
\usepackage{color}

\title[Large deviations for locally monotone stochastic partial differential equations driven by L\'evy noise]{\bf{Large deviations for locally monotone SPDEs driven by L\'evy noise}}

\author{Weina Wu}
\address{Weina Wu, School of Economics, Nanjing University of Finance and Economics, Nanjing, Jiangsu 210023, China; Faculty of Mathematics, University of Bielefeld, D-33615 Bielefeld, Germany.}
\thanks{Weina Wu is supported by the National Natural Science Foundation of China (NSFC) (No.11901285), China Scholarship Council (CSC) (No.202008320239), Deutsche Forschungsgemeinschaft (DFG, German Research Foundation) through CRC 1283}
\email{wuweinaforever@163.com}

\author{Jianliang Zhai}
\address{Jianliang Zhai, School of Mathematical Sciences,
University of Science and Technology of China, Hefei, Anhui 230026, China.}
\thanks{Jianliang Zhai is supported by NSFC (No.12131019, 11971456, 11721101), and the Fundamental Research Funds for the Central Universities (No. WK3470000031, No. WK3470000016).}
\email{zhaijl@ustc.edu.cn}

\author{Jiahui Zhu}
\address{Jiahui Zhu, School of Science, Zhejiang University of Technology, Hangzhou 310019, China.}
\thanks{Jiahui Zhu is supported by NSFC (No.12071433).}
\email{jiahuizhu@zjut.edu.cn}
\date{\today}
\begin{document}

\maketitle

\begin{abstract}
We establish a Freidlin-Wentzell type large deviation principle (LDP) for a class of stochastic partial differential equations with
locally monotone coefficients driven by L\'evy noise. Our results essentially improve a recent work on this topic (Bernoulli, 2018) by the second named author of this paper and his collaborator, because we drop the compactness embedding assumptions, and we also make the conditions for the coefficient of the noise term more specific and weaker. To obtain our results, we utilize an improved sufficient criteria of Budhiraja, Chen, Dupuis,
and Maroulas for functions of Poisson random measures, and the techniques introduced by the first and second named authors
of this paper in \cite{WZSIAM}
play important roles.

As an application, for the first time, the Freidlin-Wentzell type LDPs for many SPDEs driven by L\'evy noise in unbounded domains of $\mathbb{R}^d$, which are generally lack of compactness embeddings properties, are achieved, like e.g., stochastic $p$-Laplace equation,  stochastic Burgers-type equations, stochastic 2D Navier-Stokes equations, stochastic equations of non-Newtonian fluids, etc.

\vspace{3mm} {\bf Keywords:} Freidlin-Wentzell type large deviation principle; L\'evy noise; locally monotone; stochastic partial differential equations
\end{abstract}


\section{Introduction}
The Freidlin-Wentzell type large deviation principle (LDP) describes how the solutions of equations behave when the noise in the equation approaches zero, see \cite{FW} for more details. The LDP theory provides a powerful mathematical framework for studying the behavior of rare events and extreme fluctuations in a wide variety of complex systems. For example, large deviation theory plays a crucial role in statistical mechanics by providing a framework for analyzing rare fluctuations in physical systems, shedding light on phenomena such as phase transitions and fluctuations in thermodynamic quantities (see e.g. \cite{Ellis} for more details); in finance and economics, large deviation theory is applied to model extreme events in financial markets, such as stock market crashes or large price movements (see e.g. \cite{FGGJT} for more details).

Different from the Gaussian noise, which is commonly used in many standard engineering and statistical applications, where the underlying processes can be adequately described by normal distributions, L\'evy noise provides a flexible and powerful framework for modeling complex systems that exhibit non-Gaussian behavior, heavy-tailed distributions, and long-range dependence. This makes it well-suited for capturing the dynamics of real-world phenomena that cannot be adequately described by traditional Gaussian processes, see e.g. \cite{PZ} for more details about L\'evy noise.

Although there have been lots of papers investigated the LDPs for stochastic evolution equations (SEEs) and stochastic partial differential equations (SPDEs) driven by Gaussian noise (see e.g. \cite{BDMAOP, CW, CR, DM, FG, MSZ, WZJDE} and the references therein), there has not been very much results about the LDPs for SEEs and SPDEs driven by L\'evy noise. The first paper about large deviations for SEEs driven by L\'evy processes is \cite{RZ}, in which the authors obtained large deviation results for an Ornstein-Uhlenbeck type process driven by additive L\'evy noise. By using the contraction principle in the theory of large deviations (see e.g. \cite[Theorem 4.2]{DZ}), \cite{XuZhang} dealt with the LDPs for two-dimensional stochastic Navier-Stokes
equations driven by additive L\'evy noise. In \cite{SZ}, the authors used the theorem of Varadhan and Bryc (see e.g. \cite[Theorem 1.3.8]{DE})  coupled with
the techniques of Feng and Kurtz \cite{FK} to prove a LDP result for solutions of abstract SEEs perturbed by multiplicative L\'evy noise, on a larger space (hence with a weaker topology) than the actual state space of the solution. Since Budhiraja, Dupuis and Maroulas  introduced the weak convergence
approach in \cite{Budhiraja-Dupuis-Maroulas.}, it has been applied to study the LDPs in various dynamical systems driven by L\'evy noise, like e.g., the LDPs for \eqref{eq SPDE 01} on some nuclear spaces with a monotone condition and Hilbert spaces with a locally monotone condition were established in \cite{BCDSPA} and \cite{XZ}, respectively; in \cite{YZZ}, the authors proved the LDPs for a class of SEEs driven by multiplicative L\'evy noise on Hilbert spaces, which is  the actual state space of the solution; the LDPs for two-dimensional stochastic Navier-Stokes equations driven by multiplicative L\'evy noises were investigated in \cite{ZZ} and \cite{BPZJEMS} (we remark here that the strong solutions in \cite{XuZhang, ZZ} are concerned in the probabilistic sense, while the strong solutions in \cite{BPZJEMS} are concerned in
the PDE sense). We stress that all the existing results on this topic (though we do not list all the works here), except for two papers \cite{WZSIAM} and \cite{ZLZ} (cf. the next paragraph for further details), require the compactness of embeddings. However, it is very common to encounter situations where embeddings lack compactness, especially in some interesting state spaces such as the general $\sigma$-finite measurable spaces, and in particular, the unbounded domains in $\Bbb{R}^d$. Therefore, without the compactness conditions, whether the LDPs for SPDEs driven by L\'evy noise can be established or not is significantly worthwhile to be investigated.

As far as we know, there are only two papers about the LDPs for SPDEs driven by L\'evy noises without using the compactness assumptions. In \cite{WZSIAM}, the first and second named authors of this paper proved the LDPs for a class of stochastic porous media equations driven by L\'evy noise on general $\sigma$-finite measurable spaces. In \cite{ZLZ}, the second and third named authors of this paper and Liu established a LDP for stochastic nonlinear Schr\"odinger equation with either focusing or defocusing nonlinearity driven
by nonlinear multiplicative L\'evy noise in the Marcus canonical form on the whole  $\Bbb{R}^d$. However, there are many other SPDEs, like e.g., stochastic $p$-Laplace equation, stochastic Burgers-type equations, stochastic 2D Navier-Stokes equations, stochastic equations of non-Newtonian fluids, etc., and the LDPs for these SPDEs driven by L\'evy noises in the absence of compactness embeddings are open problems to be solved. Since these equations can be formulated within the framework of locally monotone SPDEs, this strongly motivates us to solve the problems by establishing the LDPs for a class of SPDEs with locally monotone coefficients driven by L\'evy noises, without using the compactness embeddings hypothesis (cf. Theorem \ref{th main LDP} for our main result). We would like to emphasize that, as an application of our current work, the LDPs for the aforementioned SPDE examples driven by L\'evy noises in unbounded domains of $\mathbb{R}^d$, are achieved for the first time.

We will employ the same criteria as the ones used in \cite{WZSIAM, ZLZ}. It is an improved sufficient criteria proposed in \cite{LSZZ} by the second named author of this paper and his collaborators, and can be regarded as an adaption of the weak convergence
approach introduced in \cite{Budhiraja-Dupuis-Maroulas.} and \cite{BCDSPA} for the case of Poisson random measures. The improved sufficient criteria was first introduced by Matoussi, Sabbagh, and Zhang in \cite{MSZ} for the Wiener case, and has been proved to be more effective and suitable to deal with SPDEs with highly nonlinear terms, see e.g. \cite{DWZZ, WZJDE} for the Wiener case. The main procedures of our proofs are to establish the well-posedness of solutions to the skeleton equations, the convergence of the solutions to the skeleton equations, and some convergence between the solutions to the controlled SPDEs and the solutions to the skeleton equations.

As mentioned before, in \cite{XZ}, the second named author of this paper and Xiong also proved the LDPs for \eqref{eq SPDE 01}, but our current paper essentially improves the results in \cite{XZ}.
 The main improvement is that we drop the compactness assumption for the Gelfand triples used in \cite{XZ}.
Another improvement is that we  make the conditions for the coefficient of the noise term more specific and weaker.
To be precise,
we note that $L_{2}(\nu_T)\cap \mathcal{H}_2\subset L_{\beta+2}(\nu_T)\cap \mathcal{H}_2$, $\beta\geq0$, (cf. Section \ref{Hypotheses and main result} for the definitions of the spaces and Remark \ref{H6 remark} for the proof; see also \cite{BD book}), which makes \cite[page:2848, \textbf{(H5)}]{XZ} more concrete. In our paper, we use the concrete assumption (cf. \textbf{(H6)} in Section \ref{Hypotheses and main result}). Also, we find that to obtain our results, it suffices to assume that $G_f\in \mathcal{H}^{\varpi_0}\cap L_2(\nu_T)$ for some $\varpi_0\in(0,+\infty)$ (cf. \textbf{(H7)} in Section \ref{Hypotheses and main result}). Since
$\mathcal{H}_2\subset\mathcal{H}^\infty\subset\mathcal{H}^\varpi$, $\forall\varpi\in(0,+\infty)$ (cf. \eqref{eq H infinit H} below), it means our assumption for $G_f$ is weaker than \cite[page:2849, \textbf{(H6)}]{XZ}, where $G_f$ is assumed to be in $\mathcal{H}_2\cap L_2(\nu_T)$. To achieve these improvements, we can not follow the methods used in \cite{XZ}, but have to employ different approaches/new ideas, which surprisingly simplify the proofs than expected. We utilize the improved sufficient criteria proposed in \cite{LSZZ}, which has been proven to be an effective criteria in \cite{LSZZ, WZSIAM, ZLZ} and seems to be a more appropriate criteria compared to the one used in \cite{XZ}. We
adopt a series of technical methods, including time discretization, a cut-off argument, and relative entropy estimates of a sequence of probability measures as used in \cite{WZSIAM} to prove the convergence of the solutions to the skeleton equations. With all the discussions above, we can conclude that although both \cite{XZ} and our present paper are concerned with the general framework, our work in fact has fundamental improvements compared to \cite{XZ}. These consequently lead us to use different strategies and techniques, making the entire program a nontrivial endeavor.

The remainder of this paper is organized as follows: Section \ref{Preliminaries} provides a review of basic notations related to Poisson random measures and presents fundamental knowledge about Gelfand triples. Section \ref{Hypotheses and main result} states the hypotheses and the main result: the large deviations for \eqref{eq SPDE 01}. Section \ref{wellposedness for skeleton equation} is dedicated to proving the existence and uniqueness of solutions to the skeleton equations. Sections \ref{LDP 1} and \ref{LDP 2} are focused on proving the improved sufficient criteria.

\section{Preliminaries}\label{Preliminaries}
Set $\mathbb{N}:=\{1,2,3,\cdots\}$, $\mathbb{R}:=(-\infty,+\infty)$ and $\mathbb{R}_+:=[0,+\infty)$. For a metric space $S$, the Borel $\sigma$-field on $S$ will be written as
$\mathcal{B}(S)$. We denote by $C_c(S)$ the space of real-valued continuous functions with compact supports. Let $C([0,T];S)$ be the space of continuous functions $g:[0,T]\rightarrow S$ endowed with the uniform convergence topology. Let $D([0,T];S)$ be the space of all c\`adl\`ag functions $g:[0,T]\rightarrow S$ endowed with the Skorokhod topology.

For a locally compact Polish space $S$, the space of all Borel measures on $S$ is denoted by $M(S)$, and $M_{FC}(S)$ denotes the set of all  $\mu\in M(S)$ with $\mu(O)<+\infty$
for each compact subset $O\subseteq S$. We endow $M_{FC}(S)$ with the weakest topology such that for each $g\in C_c(S)$ the mapping
$\mu\in M_{FC}(S)\rightarrow \int_Sg(s)\mu(ds)$ is continuous. This topology is metrizable such that $M_{FC}(S)$ is a Polish space, see \cite{Budhiraja-Dupuis-Maroulas.} for more details.

\vskip 0.3cm

We fix $T>0$ throughout this paper.  Assume that $Z$ is a locally compact Polish space with a $\sigma$-finite measure $\nu\in M_{FC}(Z)$. The  probability space $(\Omega, \cF, {\mathbb F}:=\{\cF_t\}_{t\in [0,T]},\mathbb{P})$ is specified as follows.
\begin{align*}
  \Omega:=M_{FC}\big([0,T]\times Z
  \times \mathbb{R}_+\big),\qquad \cF:=\cB(\Omega).
\end{align*}
We introduce the coordinate mapping
\begin{align*}
\bar{N}\colon \Omega \rightarrow \Omega:\bar{N}(\omega)=\omega,\ \forall\omega\in\Omega.
\end{align*}
Define for each $t\in [0,T]$ the $\sigma$-algebra
\begin{align*}
\mathcal{G}_{t}:=\sigma\left\{\bar{N}((0,s]\times A):\,
0\leq s\leq t,\,A\in \mathcal{B}\big(Z\times \mathbb{R}_+\big)\right\}.
\end{align*}
For the given $\nu$, it follows from \cite[Sec.I.8]{Ikeda-Watanabe} that there exists a unique probability measure $\mathbb{P}$
 on $(\Omega,\mathcal{F})$ such that $\bar{N}$ is a Poisson random measure (PRM) on $[0,T]\times Z\times\mathbb{R}_+$ with intensity measure $\text{Leb}_T\otimes\nu\otimes \text{Leb}_\infty$, where
$\text{Leb}_T$ and $\text{Leb}_\infty$ stand for the Lebesgue measures on $[0,T]$ and $\mathbb{R}_+$ respectively.

We denote by $\mathbb{F}:=\{{\mathcal{F}}_{t}\}_{t\in[0,T]}$ the
$\mathbb{P}$-completion of $\{\mathcal{G}_{t}\}_{t\in[0,T]}$ and
$\mathcal P$ the $\mathbb{F}$-predictable $\sigma$-field
on $[0,T]\times \Omega$. The  PRM $\bar{N}$
is defined on  the (filtered) probability space $(\Omega, \cF, \mathbb{F}:=\{\cF_t\}_{t\in [0,T]},\mathbb{P})$.
The corresponding compensated PRM is denoted by $\widetilde{\bar{N}}$.

Denote
\begin{align*}
{\mathcal R_+}
:=\left\{\varphi\colon [0,T]\times \Omega\times Z\to \mathbb{R}_+: \varphi\ \text{is}
\, (\mathcal{P}\otimes\mathcal{B}(Z))/\mathcal{B}(\mathbb{R}_+)\text{-measurable}\right\}.
\end{align*}
For any $\varphi\in{\mathcal R_+}$, $N^{\varphi}:\Omega\rightarrow M_{FC}([0,T]\times Z)$ is  a
counting process  on $[0,T]\times {Z}$ defined by
   \begin{align}\label{Jump-representation}
      N^\varphi((0,t]\times A):=\int_{(0,t]\times A\times \mathbb{R}_+}1_{[0,\varphi(s,z)]}(r)\, \bar{N}(dr, dz, ds),\ 0\le t\le T,\ A\in\mathcal{B}(Z).
   \end{align}
Here $N^\varphi$ can be viewed as a controlled random measure, with $\varphi$ selecting the intensity.

Analogously, $\widetilde{N}^\varphi$ is defined by replacing $\bar{N}$ with $\widetilde{\bar{N}}$ in (\ref{Jump-representation}). When $\varphi\equiv c>0$, we write $N^\varphi=N^c$ and $\widetilde{N}^\varphi=\widetilde{N}^c$.

\vskip 0.3cm

Let $V$ be a reflexive and separable Banach space, which is densely and
continuously injected in a separable Hilbert space $(H,\ \langle\cdot,\cdot\rangle_H)$. Identifying $H$ with its dual we get
$$
V\subset H\cong H^*\subset V^*,
$$
where the star `*' denotes the dual spaces. Denote $_{V^*}\langle\cdot,\cdot\rangle_{V}$ the duality between $V^*$ and $V$, then we have
\begin{eqnarray}\label{triple}
_{V^*}\langle u,v\rangle_{V}=\langle u,v\rangle_{H},\ \ \ \forall\ u\in H,\ v\in V.
\end{eqnarray}

Now we consider the following type of SPDEs driven by a L\'evy process: for any $\epsilon\in(0,1)$,
\begin{eqnarray}\label{eq SPDE 01}
d X^\epsilon_t= \mathcal{A}(t,X^\epsilon_t)dt+\epsilon\int_{ Z}f(t,X^\epsilon_{t-},z)\widetilde{N}^{\epsilon^{-1}}(dz,dt),\ \ t\in[0,T],
\end{eqnarray}
with initial data $x\in H$,
where $\mathcal{A}:[0,T]\times V\rightarrow V^*$ is a $\mathcal{B}([0,T])\otimes\mathcal{B}(V)/\mathcal{B}(V^*)$-measurable function and $f:[0,T]\times H\times  Z\rightarrow H$ is a $\mathcal{B}([0,T])\otimes\mathcal{B}(H)\otimes\mathcal{B}( Z)/\mathcal{B}(H)$-measurable function.

\begin{definition}\label{def 02}
 An $H$-valued c\`adl\`ag $\mathbb{F}$-adapted process $X^\epsilon=\{X^\epsilon_t\}_{t\in[0,T]}$ is called a solution of \eqref{eq SPDE 01}, if for its $dt\times \mathbb{P}$-equivalent class $\widehat{X}^\epsilon$ we have
\begin{itemize}
\item[(1)] $\widehat{X}^\epsilon\in L^\alpha([0,T];V)\cap L^2([0,T];H)$, $\mathbb{P}$-a.s.;

\item[(2)] the following equality holds $\mathbb{P}$-a.s.:
\begin{eqnarray*}
X^\epsilon_t=x+\int_0^t \mathcal{A}(s,\overline{X}^\epsilon_s)ds+\epsilon\int_0^t\int_{ Z}f(s,{X}^\epsilon_{s-},z)\widetilde{N}^{\epsilon^{-1}}(dz,ds),\ \forall t\in[0,T],
\end{eqnarray*}
\end{itemize}
where $\overline{X}^\epsilon$ is any $V$-valued progressively measurable $dt\times \mathbb{P}$ version of $\widehat{X}^\epsilon$.
\end{definition}

\begin{remark}
It is  a well-known and typical conclusion in probability theory that $dt\times \mathbb{P}$-equivalent processes/versions are regarded as the same stochastic process, and it is always impossible to find one version to satisfy all required properties.
In the above definition, three $dt\times \mathbb{P}$-equivalent versions ``$X^\epsilon$, $\widehat{X}^\epsilon$, $\overline{X}^\epsilon$" are implicitly required to ensure that each version satisfies some required properties.

In the rest of this paper, when there is no danger of causing ambiguity, we denote $dt\times \mathbb{P}$-equivalent processes/versions of a given
process $X$ by itself.
\end{remark}

\section{Hypotheses and main result}\label{Hypotheses and main result}

In this paper, we assume the following conditions.
Suppose that there exist constants $\alpha>1,\ \beta\geq0,\ \theta>0,\ C>0$, a function $F\in L^1([0,T];\mathbb{R}^+)$ and a function $\rho:V\rightarrow [0,+\infty)$ which is Borel measurable and bounded on the balls,
 such that the following conditions hold for all $v,v_1,v_2\in V$ and $t\in [0,T]$:
\begin{itemize}
\item[{\bf (H1)}](Hemicontinuity) The map $s\mapsto~ _{V^*}\langle \mathcal{A}(t,v_1+s v_2),v\rangle_{V}$ is continuous on $\mathbb{R}$.

\item[{\bf (H2)}](Local monotonicity)
\begin{eqnarray*}
2~_{V^*}\langle \mathcal{A}(t,v_1)-\mathcal{A}(t,v_2),v_1-v_2\rangle_{V}
\leq
(F_t+\rho(v_2))\|v_1-v_2\|^2_H.
\end{eqnarray*}

\item[{\bf (H3)}](Coercivity)
$$
2~_{V^*}\langle \mathcal{A}(t,v),v\rangle_{V}+\theta\|v\|^\alpha_V
\leq
F_t(1 + \|v\|^2_H).
$$

\item[{\bf (H4)}](Growth)
$$
\|\mathcal{A}(t,v)\|^{\frac{\alpha}{\alpha-1}}_{V^*}
\leq
(F_t+C\|v\|^\alpha_V)(1+\|v\|_H^\beta).
$$

\item[{\bf (H5)}]
\begin{eqnarray*}
\rho(v)\leq C(1+\|v\|_V^\alpha)(1+\|v\|^\beta_H).
\end{eqnarray*}
\end{itemize}

For simplicity we write $\nu_T$ for $\text{Leb}_T\otimes\nu$.
For $p\in(0,+\infty)$, define
 \begin{eqnarray*}
\mathcal{H}_p:=&&\!\!\!\!\!\!\!\!\Big\{h:[0,T]\times{Z}\to\mathbb{R}^+:\ \exists\delta>0, s.t.\ \forall O\in\mathcal{B}([0,T])\otimes\mathcal{B}({Z})~\text{with}~\nu_T(O)<+\infty,\\
&&\qquad\qquad \mbox{we have } \ \int_O\exp(\delta h^p(t,z))\nu(dz)dt<\infty\Big\},\label{Fun h}
\end{eqnarray*}
and
\begin{eqnarray*}
L_p(\nu_T):=\Big\{h:[0,T]\times{Z}\to\mathbb{R}^+:
           \int_0^T\int_Zh^p(t,z)\nu(dz)dt<\infty\Big\}.
\end{eqnarray*}
For $\varpi\in(0,+\infty)$,
define
\begin{eqnarray*}
\mathcal{H}^\varpi:=&&\!\!\!\!\!\!\!\!\big\{h:[0,T]\times Z\rightarrow\Bbb{R}^+: \forall \Gamma\in\mathcal{B}([0,T])\otimes\mathcal{B}(Z)~\text{with}~\nu_T(\Gamma)<+\infty,~\nonumber\\
&&\ \ \ \ \ \ \ \ \ \ \text{we~have}~\int_\Gamma \exp(\varpi h(s,z))\nu(dz)ds<\infty\big\},
\end{eqnarray*}
and denote by $\mathcal{H}^\infty=\bigcap_{\varpi\in(0,+\infty)}\mathcal{H}^\varpi$. By \cite[Remark 3.2]{BCDSPA}, we have
\begin{eqnarray}\label{eq H infinit H}
\mathcal{H}_2\subset\mathcal{H}^\infty.
\end{eqnarray}

To study the large deviation principle (LDP) of \eqref{eq SPDE 01}, besides the conditions \textbf{(H1)}-\textbf{(H5)}, we further need
\begin{itemize}
  \item[{\bf (H6)}] There exists  
  $L_f \in  L_{2}(\nu_T)\cap \mathcal{H}_2$
  such that
  $$
  \|f(t,v,z)\|_H\leq L_f(t,z)(1+\|v\|_H),\ \ \forall (t,v,z)\in[0,T]\times V\times{Z}.
  $$

  \item[{\bf (H7)}]There exists $G_f\in \mathcal{H}^{\varpi_0}\cap L_2(\nu_T)$ for some $\varpi_0\in(0,+\infty)$, such that
  $$
  \|f(t,v_1,z)-f(t,v_2,z)\|_H\leq G_f(t,z)\|v_1-v_2\|_H,\ \ \forall (t,z)\in[0,T]\times{Z},\ \ v_1,v_2\in V.
  $$
\end{itemize}

\begin{remark}\label{H6 remark}
We claim that $L_{2}(\nu_T)\cap \mathcal{H}_2\subset L_{\beta+2}(\nu_T)\cap \mathcal{H}_2$.
\begin{proof}
Let $h\in\mathcal{H}_2\cap L_2(\nu_T)$. By the definition of $\mathcal{H}_2$, there exists a $\delta>0$ such that $\forall O\in\mathcal{B}([0,T])\otimes\mathcal{B}({Z})$ with $\nu_T(O)<+\infty$, we have $\int_O\exp(\delta h^2(t,z))\nu(dz)dt<+\infty$.
For this $\delta$ and  $\beta\geq0$, there exists $M>0$, which depends on $\delta$ and $\beta$, such that
\begin{eqnarray}\label{0}
  \exp(\delta y^2)\geq y^{\beta+2},\ \forall y\geq M.
\end{eqnarray}
Denote $E:=\{(t,z)\in[0,T]\times Z: h(t,z)\geq M\}$ and $E^c:=[0,T]\times Z\setminus E$. Since $h\in L_2(\nu_T)$, we have
$$\nu_T(E)\leq\frac{1}{M^2}\int_0^T\int_Z h^2(t,z)\nu(dz)dt<\infty,$$
hence by the definition of $\mathcal{H}_2$,
$$\int_E\exp(\delta h^2(t,z))\nu(dz)dt<\infty.$$
Then,
\begin{eqnarray*}
&&\!\!\!\!\!\!\!\!\int_0^T\int_Zh^{\beta+2}(t,z)\nu(dz)dt\nonumber\\
=&&\!\!\!\!\!\!\!\!\int_Eh^{\beta+2}(t,z)\nu(dz)dt+\int_{E^c}h^{\beta+2}(t,z)\nu(dz)dt\nonumber\\
\leq&&\!\!\!\!\!\!\!\!\int_E\exp(\delta h^2(t,z))\nu(dz)dt+M^\beta\int_{E^c}h^2(t,z) \nu(dz)dt\nonumber\\
\leq&&\!\!\!\!\!\!\!\!\int_E\exp(\delta h^2(t,z))\nu(dz)dt+M^\beta\int_0^T\int_Zh^2(t,z) \nu(dz)dt\nonumber\\
<&&\!\!\!\!\!\!\!\!+\infty,
\end{eqnarray*}
where we used \eqref{0} in the first inequality. Therefore, $h\in L_{\beta+2}(\nu_T)\cap \mathcal{H}_2$.
\end{proof}
\end{remark}

With a minor modification of
\cite[Theorem 1.2]{BLZnolinear} and using Remark \ref{H6 remark}, we have the following existence and uniqueness result for the solution of \eqref{eq SPDE 01}.

\begin{proposition}\label{thm solution 01}
Suppose that conditions $\textbf{(H1)}$-$\textbf{(H7)}$ hold.
Then
\eqref{eq SPDE 01} has a unique solution $X^\epsilon=\{X^\epsilon_t\}_{t\in[0,T]}$. 
%
\end{proposition}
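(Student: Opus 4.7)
The plan is to reduce Proposition \ref{thm solution 01} to the general existence and uniqueness theorem \cite[Theorem 1.2]{BLZnolinear} for locally monotone SPDEs driven by compensated Poisson random measures. Two small adjustments are needed: reconciling our integrability hypothesis (H6) with the higher-moment integrability used in the reference, and accounting for the $\epsilon$-scaling in $\widetilde{N}^{\epsilon^{-1}}$.

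\smallskip
\textbf{Step 1 (Integrability reconciliation).} Remark \ref{H6 remark} shows $L_2(\nu_T)\cap\mathcal{H}_2 \subset L_{\beta+2}(\nu_T)\cap\mathcal{H}_2$, so the hypothesis $L_f\in L_2(\nu_T)\cap \mathcal{H}_2$ in (H6) automatically delivers the $L_{\beta+2}$-integrability needed to control the noise in combination with the growth exponent $\beta$ appearing in (H4); the exponential integrability $\mathcal{H}_2$ is used in the Burkholder-type estimates for the jump integral. The Lipschitz bound (H7) enters only in the uniqueness argument and the pathwise continuous dependence on initial data.

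\smallskip
\textbf{Step 2 (Absorbing the $\epsilon$-scaling).} For each fixed $\epsilon\in(0,1)$, $\widetilde{N}^{\epsilon^{-1}}$ is a compensated PRM with intensity $\epsilon^{-1}\nu(dz)\,dt$. Making the change $(\nu,f)\mapsto(\epsilon^{-1}\nu,\,\epsilon f)$ places \eqref{eq SPDE 01} exactly into the framework of \cite{BLZnolinear}. The classes $\mathcal{H}_2$, $\mathcal{H}^\varpi$ and $L_p(\nu_T)$ are all stable under replacing $\nu$ by a positive multiple of itself (rescale $\delta$ or $\varpi$ in the exponential classes, absorb the constant in $L_p$), so (H6)-(H7) transfer to the rescaled coefficients with $\epsilon$-dependent constants. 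The assumptions (H1)-(H5) on $\mathcal{A}$ are unaffected.

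\smallskip
\textbf{Step 3 (Invoke the cited theorem).} With Steps 1 and 2 in hand, all hypotheses of \cite[Theorem 1.2]{BLZnolinear} are met. Existence then follows from a Galerkin approximation in finite-dimensional subspaces of $V$, coercivity-based a priori bounds yielding uniform estimates in $L^\infty([0,T];H)\cap L^\alpha([0,T];V)$, tightness in the Skorokhod path space, and passage to the limit using the local monotonicity (H2) together with the hemicontinuity (H1) (the standard Minty-Browder type trick). Uniqueness is obtained by applying the It\^o formula to $\|X^\epsilon_t-Y^\epsilon_t\|_H^2$ for two solutions, using (H2) to cancel the drift difference up to $(F_t+\rho(Y^\epsilon_t))\|X^\epsilon-Y^\epsilon\|_H^2$, using (H7) to dominate the martingale quadratic variation by $G_f^2\|X^\epsilon-Y^\epsilon\|_H^2$, and concluding via a stochastic Gronwall argument.

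The genuinely technical work — the Galerkin passage to the limit under only local monotonicity — is already carried out in \cite{BLZnolinear}; the only thing to check here is that fixing $\epsilon\in(0,1)$ and passing to the rescaled intensity does not spoil any of those a priori estimates, which is immediate since all estimates in the reference are of the form ``finite for each realization of the intensity measure''. This is the sense in which the proof is a ``minor modification'' of the reference.
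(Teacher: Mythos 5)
Your proposal is correct and follows the same route as the paper: reduce to \cite[Theorem 1.2]{BLZnolinear} and supply the needed $L_{\beta+2}$-integrability via Remark~\ref{H6 remark}. The paper's accompanying remark emphasizes a slightly different bookkeeping adaptation (that $F_t$ here is a time-dependent $L^1$ function rather than a constant, so the local monotonicity and coercivity constants in the reference must be replaced by $L^1$ functions such as $\int_Z G_f^2(t,z)\nu(dz)+F_t$, and there is no Wiener term), whereas you emphasize the $\epsilon$-scaling of the intensity; both are minor and your uniqueness sketch already uses the correct $L^1$-Gronwall structure, so there is no gap.
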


\begin{remark}
The main differences between the proofs of Proposition \ref{thm solution 01} and \cite[Theorem 1.2]{BLZnolinear} lie in two aspects. There is no Wiener noise term in \eqref{eq SPDE 01}, which makes the proof of Proposition \ref{thm solution 01} simpler. The mapping $F$ appearing in Conditions (H2)-(H4) of this paper is a deterministic function, while in the conditions of \cite[Theorem 1.2]{BLZnolinear}, $F$ could depend on $\omega$. This leads to the following modifications:  The local monotonicity and coercivity conditions in \cite[Theorem 1.2]{BLZnolinear} are satisfied for \eqref{eq SPDE 01} with the constant $C$ replaced by an $L^1([0,T];\Bbb{R}^+)$ function, e.g., local monotonicity holds with $C$ replaced by $\int_ZG^2_f(t,z)\nu(dz)+F_t$ and coercivity holds with $C$ replaced by $F_t$.
\end{remark}


\begin{remark}
The conditions in Proposition \ref{thm solution 01} are satisfied by a very large class of SPDEs driven by a multiplicative pure jump L\'evy noise, including the stochastic porous medium equation, stochastic p-Laplace equation, stochastic Burgers type equations, stochastic 2D Navier-Stokes equations and many other stochastic hydrodynamical systems. \cite[Section 2]{BLZnolinear} presents many concrete examples to illustrate the applications of this proposition. It is omitted in this paper.
\end{remark}

In the present paper, we aim to establish a LDP for the solution of (\ref{eq SPDE 01}) as $\epsilon\rightarrow 0$ on
$D([0,T];H)$.

\vskip 0.3cm
We first state the LDP we are concerned with.
The theory of large deviations is concerned with events $A\in\mathcal{B}(D([0,T];H))$ for which probability $\mathbb{P}(X^\epsilon\in A)$
converges to
zero exponentially fast as $\epsilon\rightarrow 0$. The exponential decay rate of such probabilities is typically
expressed
in terms of a ``rate function" $I$ defined as below.
    \begin{definition}\label{Dfn-Rate function}
       \emph{\textbf{(Rate function)}} A function $I: D([0,T];H)\rightarrow[0,+\infty]$ is called a rate function on
       $D([0,T];H)$,
       if for each $M<+\infty$ the level set $\{y\in D([0,T];H):I(y)\leq M\}$ is a compact subset of $D([0,T];H)$.
       For
       $A\in \mathcal{B}(D([0,T];H))$,
       we define $I(A):=\inf_{y\in A}I(y)$.
    \end{definition}
    \begin{definition}  \label{d:LDP}
       \emph{\textbf{(Large deviation principle)}} Let $I$ be a rate function on $D([0,T];H)$. The sequence
       $\{X^\epsilon\}_{\epsilon\in(0,1)}$
       is said to satisfy a large deviation principle (LDP) on $D([0,T];H)$ with rate function $I$ if the following two
       conditions
       hold.

         a. LDP upper bound. For each closed subset $O$ of $D([0,T];H)$,
              $$
                \limsup_{\epsilon\rightarrow 0}\epsilon\log\mathbb{P}(X^\epsilon\in O)\leq-I(O).
              $$

         b. LDP lower bound. For each open subset $G$ of $D([0,T];H)$,
              $$
                \liminf_{\epsilon\rightarrow 0}\epsilon\log\mathbb{P}(X^\epsilon\in G)\geq-I(G).
              $$
    \end{definition}

Before stating our main result, we need to introduce the so-called skeleton equation. For each Borel measurable function
$g\colon [0,T]\times Z\to [0,+\infty)$, define
\begin{align*}
Q(g):=\int_{[0,T]\times Z}\ell\big(g(s,z)\big) \, \nu(dz)ds,
\end{align*}
where $\ell(x)=x\log x-x +1,\ \ell(0):=1$.
For each $N\in\Bbb{N}$, define
\begin{align*}
     S^N:=\Big\{g:[0,T]\times Z\rightarrow[0,\infty):\,Q(g)\leq N\Big\}.
\end{align*}
Any  $g\in S^N$ can be identified with a measure $\hat{g}\in M_{FC}([0,T]\times Z)$, defined by
   \begin{align*}\label{eq.corres-func-meas}
      \hat{g}(A):=\int_A g(s,z)\, \nu(dz)ds,\ \forall A\in\mathcal{B}([0,T]\times Z).
   \end{align*}
This identification induces a topology on $S^N$ under which $S^N$ is a compact space (see the Appendix of \cite{BCDSPA}), which is used throughout
the paper.

Denote
\begin{eqnarray*}\label{eq P22}
S:= \bigcup_{N\in\mathbb{N}}S^N.
\end{eqnarray*}

For any $g\in S$, consider the following deterministic PDE (called the skeleton equation): for any $t\in[0,T]$,

\begin{eqnarray}\label{eq Skeleton-1}
  Y^{g}_t=x+\int_0^t\mathcal{A}(s,Y^{g}_s)ds+\int_0^t\int_Zf(s,Y^{g}_s,z)(g(s,z)-1)\nu(dz)ds,\ \text{in} \ V^*.
\end{eqnarray}

By Proposition \ref{Prop 4.4} below, this equation has a unique solution $Y^{g}\in C([0,T];H)\cap L^\alpha([0,T];V)$.

\vskip 0.3cm

The following is the main result of this paper.
\begin{theorem}
\label{th main LDP}
 Assume that conditions $\textbf{(H1)}$-$\textbf{(H7)}$ hold.  Then the family $\{X^\epsilon\}_{\epsilon\in(0,1)}$ satisfies
a LDP on $D([0,T];H)$ with the rate function $I:D([0,T];H)\rightarrow[0,+\infty]$ defined by
     \begin{eqnarray*}\label{Rate function I}
        I(\phi):=\inf\Big\{ Q(g):~g\in S \text{ such that } Y^{g}=\phi\Big\},\ \qquad \phi\in D([0,T];H).
     \end{eqnarray*}
     Here $Y^{g}$ is the unique solution to \eqref{eq Skeleton-1}.
By convention, $\inf\emptyset=+\infty$.

\end{theorem}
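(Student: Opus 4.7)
\medskip

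\noindent\textbf{Proof plan for Theorem \ref{th main LDP}.}

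The overall strategy is the weak convergence approach: we verify the improved sufficient criteria of \cite{LSZZ} (the version for Poisson random measures already exploited in \cite{WZSIAM, ZLZ}). This reduces the LDP to two ingredients: \emph{(a)} a stability statement for the skeleton equation along sequences $g_n \to g$ in $S^N$, and \emph{(b)} a weak-convergence statement relating the family of controlled SPDEs $X^{\epsilon,\varphi_\epsilon}$ (obtained from \eqref{eq SPDE 01} by a Girsanov-type change of intensity via $\varphi_\epsilon = 1 + (g_\epsilon-1)/\epsilon$ with $g_\epsilon\in S^N$) to the skeleton solutions $Y^{g}$ as $\epsilon\to 0$. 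The rate function is then the standard entropy-minimizing functional $I(\phi) = \inf\{Q(g): Y^{g}=\phi\}$, and the compactness of the level sets follows from the compactness of $S^N$ once (a) is established.

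For step \emph{(a)} (carried out in Section \ref{LDP 1}), the plan is: first, derive uniform-in-$N$ a priori bounds for $Y^{g}$ in $L^\infty([0,T];H)\cap L^\alpha([0,T];V)$ using the coercivity \textbf{(H3)}, the growth \textbf{(H4)}, and the estimates from \textbf{(H6)}--\textbf{(H7)} combined with the elementary inequality $|g-1|\leq \ell(g) + e^{-1}$ used together with the exponential integrability built into $\mathcal{H}_2$ and $\mathcal{H}^{\varpi_0}$. Second, for a sequence $g_n \to g$ in $S^N$, I would write the equation satisfied by $Y^{g_n}-Y^{g}$, apply the chain rule in the Gelfand triple $V\subset H\subset V^*$, and use local monotonicity \textbf{(H2)} and Lipschitz continuity \textbf{(H7)} to produce a Gronwall-type inequality controlled by $\int_0^t\int_Z (f(s,Y^g,z), Y^{g_n}-Y^g)_H (g_n-g)(s,z)\nu(dz)ds$. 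To pass to the limit in this cross term without compactness, I would combine time discretization in $t$ with a cut-off on $\|Y^g\|_V$ (to reduce to bounded $V$-valued integrands) and then invoke the weak convergence $\hat g_n \to \hat g$ in $M_{FC}([0,T]\times Z)$ along with the uniform integrability furnished by $L_f\in L_2(\nu_T)\cap\mathcal{H}_2$ and $G_f\in \mathcal{H}^{\varpi_0}\cap L_2(\nu_T)$. This compactness-free convergence argument is the technical heart of the work and follows the template of \cite{WZSIAM}.

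For step \emph{(b)} (Section \ref{LDP 2}), I would first write the controlled SPDE for $X^{\epsilon,\varphi_\epsilon}$, split the jump integral into a compensated part (driven by $\widetilde N^{\epsilon^{-1}\varphi_\epsilon}$) and a drift part involving $(\varphi_\epsilon -1)$, and identify $\epsilon \int f\, d\widetilde N^{\epsilon^{-1}\varphi_\epsilon}$ as the vanishing stochastic term. Using \textbf{(H3)}--\textbf{(H5)}, Burkholder-type estimates on Poisson stochastic integrals, and the cost bound $\epsilon\int_0^T\!\int_Z \ell(\varphi_\epsilon)\,d\nu_T \leq N$, I would establish uniform moment estimates for $X^{\epsilon,\varphi_\epsilon}$. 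To show $X^{\epsilon,\varphi_\epsilon} - Y^{g_\epsilon}\to 0$ in $D([0,T];H)$ in probability, I would employ the relative entropy estimates for the laws of $\varphi_\epsilon \cdot \nu_T$, following \cite{WZSIAM}, together with a stopping-time cut-off in $\|X^{\epsilon,\varphi_\epsilon}\|_V$ and time discretization, then close a Gronwall estimate using local monotonicity \textbf{(H2)}.

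The main obstacle is precisely the one the introduction flags: without compact embeddings $V \hookrightarrow H$, the passage to the limit in the nonlinear drift $\mathcal{A}(t,\cdot)$ and in the jump term with intensity $g_n(s,z)\nu(dz)ds$ cannot rely on Aubin-Lions or standard tightness-plus-Skorokhod. The delicate part is therefore the combined time-discretization/cut-off/relative-entropy scheme, which replaces $V$-compactness by quantitative $H$-estimates on small time intervals and on large $V$-norm sets, and then controls the residual by the exponential tail conditions \textbf{(H6)}--\textbf{(H7)}. Once (a) and (b) are in place, the LDP and the form of $I$ are immediate from the criteria of \cite{LSZZ}.
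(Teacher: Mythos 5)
Your high-level strategy matches the paper's exactly: invoke the improved sufficient criteria of \cite{LSZZ} (via \cite[Theorem 4.4]{LSZZ}), define the skeleton map $\Gamma(g)=Y^g$ from Proposition~\ref{Prop 4.4}, define the controlled SPDE $X^{\psi_\epsilon}=\Gamma^\epsilon(N^{\epsilon^{-1}\psi_\epsilon})$, and reduce the theorem to verifying (\textbf{LDP1}) (continuity of $\Gamma$ on $S^N$) and (\textbf{LDP2}) (closeness of $X^{\psi_\epsilon}$ to $Y^{\psi_\epsilon}$). However, you have misallocated the technical machinery between the two steps, and this reflects a misreading of where the compactness-free difficulty actually lies. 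In the paper, the time-discretization/cut-off/relative-entropy scheme you describe (the $\ell(b)=b\log b-b+1$ device from the inequality $ab\le e^{\sigma a}+\ell(b)/\sigma$, together with the dyadic grid $\bar{s}_m$, the truncation set $A_{L_f,J}$, and restriction to compact $K_\varepsilon\subset Z$) is entirely the engine of (\textbf{LDP1}), i.e.\ the deterministic skeleton stability, because there the two controls $\psi_n$ and $\psi$ differ and one must pass to the limit in the term $\int f(s,Y^\psi_s,z)(\psi_n-\psi)\,\nu(dz)\,ds$ without any compactness of $V\hookrightarrow H$. By contrast, (\textbf{LDP2}) is the easier part in this paper: both $X^{\psi_\epsilon}$ and $Y^{\psi_\epsilon}$ are driven by the \emph{same} control $\psi_\epsilon$, so the difference is controlled directly by (\textbf{H2}), (\textbf{H7}), It\^o's formula, Gronwall, and Doob's $L^1$ maximal inequality for Poisson stochastic integrals, yielding $\mathbb{E}\big[\sup_t\|X^{\psi_\epsilon}_t-Y^{\psi_\epsilon}_t\|_H^2\big]=O(\epsilon^{1/2})$ with no discretization, stopping-time cut-off, or entropy argument whatsoever. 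Assigning those heavier tools to (\textbf{LDP2}) would not break anything, but it substantially overcomplicates the simpler step while leaving the genuinely hard step---(\textbf{LDP1})---under-described. Two further points: the cut-off you propose for (\textbf{LDP1}) is on $\|Y^g\|_V$, which is not what is needed (and is not even pointwise bounded in time; only $\int_0^T\|Y^g\|_V^\alpha\,dt$ is controlled); the paper instead truncates the jump kernel $L_f$ at level $J_\varepsilon$ and localizes to a compact $K_\varepsilon\subset Z$, exploiting (\textbf{H6})--(\textbf{H7}) and Lemma~\ref{lemma-est-ass}. Also, the formula $\varphi_\epsilon = 1 + (g_\epsilon -1)/\epsilon$ is incorrect for the Poisson setting: the controlled intensity here is simply $\epsilon^{-1}\psi_\epsilon\nu$ with $\psi_\epsilon\in\mathcal{S}^N$, so the drift produced by the compensator shift is $\int f(\psi_\epsilon-1)\,\nu\,ds$ and $\psi_\epsilon$ plays the role of the skeleton control $g$ directly, with no $1/\epsilon$ rescaling of the control itself.
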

\begin{proof}
Proposition \ref{Prop 4.4} allows us to define a map
\begin{eqnarray}\label{def G0}
\Gamma:S\ni g \mapsto Y^{g}\in C([0,T];H)\cap L^\alpha([0,T];V),
\end{eqnarray}
where $Y^{g}$ is the unique solution of (\ref{eq Skeleton-1}).

Let $\{Z_n\}_{n\in\mN}$ be a sequence of compact sets satisfying that $Z_n\subseteq Z$
and $ Z_n \nearrow Z$.  For each $n\in\mN$, let
\begin{eqnarray*}\label{Eq Rn}
     \mathcal{R}_{b,n}
= \Big\{\psi\in \mathcal{R}_+:
\psi(t,z,\omega)\in \begin{cases}
 [\tfrac{1}{n},n], &\text{if }z\in Z_n\\
\{1\}, &\text{if }z\in Z_n^c
\end{cases}
\text{ for all }(t,\omega)\in [0,T]\times \Omega
\Big\},
\end{eqnarray*}
and $\mathcal{R}_{b}=\bigcup _{n=1}^\infty \mathcal{R}_{b,n}$.
For any $N\in\mathbb{N}$, let $\mathcal{S}^N$ be a space of stochastic processes on $\Omega$ defined by
\begin{eqnarray*}
\mathcal{S}^N:=\{\psi\in  \mathcal{R}_{b}:\, \psi(\cdot,\cdot,\omega)\in S^N
\text{ for $\mathbb{P}$-a.e. $\omega\in \Omega$}\}.
\end{eqnarray*}

By Proposition \ref{thm solution 01}, the Yamada-Watanabe theorem and Girsonov theorem for Poisson random measures, for $\epsilon\in(0,1)$,
there exists a map
$$\Gamma^\epsilon:M_{FC}\big([0,T]\times Z\big)\rightarrow D([0,T];H)\cap L^\alpha([0,T];V)$$
 such that $X^\epsilon=\Gamma^\epsilon(N^{\epsilon^{-1}})$ is the unique solution to \eqref{eq SPDE 01}. Moreover, for any $N\in(0,\infty)$ and $\psi_\epsilon\in \mathcal{S}^N$, let
\begin{equation}\label{sol-control}
X^{\psi_\epsilon}:=\Gamma^\epsilon( N^{\epsilon^{-1}\psi_\epsilon}),
\end{equation}
then $X^{\psi_\epsilon}$ is the unique solution of the integral equation (or generally called ``controlled SPDE"):

\begin{eqnarray}\label{sol-control 01}
X^{\psi_\epsilon}_t=&&\!\!\!\!\!\!\!\!x+ \int_0^t\mathcal{A}(s,X^{\psi_\epsilon}_s)ds+\epsilon\int_0^t\int_{Z}f(s,X^{\psi_\epsilon}_{s-},z)\Big(N^{\epsilon^{-1}\psi_\epsilon}(dz,ds)-\epsilon^{-1}\nu(dz)ds\Big)\nonumber\\
=&&\!\!\!\!\!\!\!\!x+ \int_0^t\mathcal{A}(s,X^{\psi_\epsilon}_s)ds
  +
  \epsilon\int_0^t\int_{ Z}f(s,X^{\psi_\epsilon}_{s-},z)\widetilde{N}^{\epsilon^{-1}\psi_\epsilon}(dz,ds)\nonumber\\
  &&\!\!\!\!\!\!\!\!+\int_0^t\int_{ Z}f(s,X^{\psi_\epsilon}_{s},z)\big(\psi_\epsilon(s,z)-1\big)\nu(dz)ds
  ,\ \forall t\in[0,T].
\end{eqnarray}

For the details of the proof of the above result, we refer \cite[Lemma 7.1]{BPZJEMS}.

According to \cite[Theorem 4.4]{LSZZ}, which is an adaption of the original results given in \cite[Theoerm 4.2]{Budhiraja-Dupuis-Maroulas.} and \cite[Theorems 2.3 and 2.4]{BCDSPA}, to complete the proof of the theorem,
it is sufficient to verify the following two claims:
\begin{itemize}
  \item[(\textbf{LDP1})] For any given $N\in\mathbb{N}$, let $\psi,\psi_n \in S^N$, $n\in\mathbb{N}$, be such that
$\psi_n\rightarrow \psi$ in $S^N$ as $n\rightarrow+\infty$. Then
$$
\lim_{n\rightarrow\infty}\sup_{t\in[0,T]}\|{\Gamma}(\psi_n)(t)-{\Gamma}(\psi)(t)\|_{H}=0.
$$
  \item[(\textbf{LDP2})] For any given $N\in\mathbb{N}$, let $\{\psi_\epsilon,~\epsilon>0\}\subset \mathcal{S}^N$. Then, for any $\delta>0$,
  \begin{align*}
\lim_{\epsilon\rightarrow0}\mathbb{P}\left(\sup_{t\in[0,T]}\|\Gamma^\epsilon( N^{\epsilon^{-1}\psi_\epsilon})(t)-{\Gamma}(\psi_{\epsilon})(t))\|_{H}\geq \delta\right)=0.
\end{align*}
\end{itemize}

The verification of (\textbf{LDP1}) will be given in Proposition \ref{Prop LDP1}; see Section \ref{LDP 1}. (\textbf{LDP2}) will
be established in Proposition \ref{Prop LDP2}; see Section \ref{LDP 2}.
\end{proof}

Finally, we introduce the following results, which will be used later.

\begin{lemma}\label{lemma-est-ass}
\begin{enumerate}
       \item[(i)] Let $\chi\in\mathcal{H}_2\cap L_2(\nu_T)$, we have
    \begin{eqnarray}\label{eq lemma-est-ass 01}
    \sup _{\hbar \in S^{N}}\int_0^T\int_{Z}|\chi(t,z)|^{2}(\hbar(t, z)+1) \nu(dz) d t<\infty,~~\forall N\in\Bbb{N}.
    \end{eqnarray}
  \item[(ii)] Let $\chi\in\mathcal{H}^{\varpi}\cap L_2(\nu_T)$ for some $\varpi\in(0,+\infty)$, we have
    \begin{eqnarray}\label{eq lemma-est-ass 02}
   \sup _{\hbar \in S^{N}}\int_0^T \int_{Z}\chi(t,z)|\hbar (t, z)-1| \nu(dz) dt<\infty,~~\forall N\in\Bbb{N}.
    \end{eqnarray}
    \item[(iii)]Let $\chi\in\mathcal{H}^{\infty}\cap L_2(\nu_T)$ and $N\in\Bbb{N}$, we have
  \begin{eqnarray}\label{eq lemma-est-ass 03}
   \lim_{\delta\rightarrow0}\sup _{\hbar \in S^{N}}\sup_{O\in\mathcal{B}([0,T])\atop{\rm Leb}_T(O)\leq \delta}\int_O \int_{Z}\chi(t,z)|\hbar (t, z)-1| \nu(dz) dt=0.
    \end{eqnarray}
   \item[(iv)]Let $\chi\in\mathcal{H}^{\infty}\cap L_2(\nu_T)$ and $N\in\Bbb{N}$, then for any $\varepsilon>0$ there exists a compact set $K_\varepsilon\subset Z$ such that
\begin{eqnarray}\label{eq zhai 1}
\sup_{\hbar\in S^N}\int_0^T\int_{K_\varepsilon^c}\chi(t,z)|\hbar(t,z)-1|\nu(dz)dt\leq\varepsilon,
\end{eqnarray}
where $K_\varepsilon^c$ is the complement of $K_\varepsilon$.
 \item[(v)] Let $\chi\in\mathcal{H}^\infty$, $K$ be a compact subset of $Z$ and $N\in\Bbb{N}$, we have
  \begin{eqnarray}\label{Lf}
   \lim_{j\rightarrow\infty}\sup_{\hbar\in S^N}\int_0^T\int_K \chi(t,z)\cdot 1_{\{\chi(t,z)\geq j\}}(t,z)\hbar(t,z)\nu(dz)dt=0.
    \end{eqnarray}
\end{enumerate}
\end{lemma}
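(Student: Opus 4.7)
The unifying tool is the Fenchel--Young inequality for the convex conjugate pair $(e^x-1,\ell(y))$: for all $a\in\mathbb R$, $b\ge 0$ and $\sigma>0$,
\[
  ab \;\le\; \sigma^{-1}(e^{\sigma a}-1) + \sigma^{-1}\ell(b),
\]
which converts exponential-moment hypotheses on $\chi$ into bounds controlled by $Q(\hbar)\le N$. The second tool is the Pinsker-type estimate $\ell(\hbar)\ge\tfrac12(1-\hbar)^2$ on $[0,1]$ and $\ell(\hbar)\ge(2M)^{-1}(\hbar-1)^2$ on $[1,M]$, together with $\ell(\hbar)\ge\hbar(\log M-1)$ for $\hbar\ge M>e$, which yields the tail bound $\nu_T(\{\hbar>M\})\le N/(M(\log M-1))\to 0$. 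The third tool is the splitting $\{\chi\ge 1\}$ versus $\{\chi<1\}$: by Chebyshev, $\nu_T(\{\chi\ge 1\})\le\|\chi\|_{L_2(\nu_T)}^2<\infty$, so the exponential-integrability hypothesis applies there, while on $\{\chi<1\}$ the exponential linearizes via $e^{\sigma y}-1\le(e^\sigma-1)y$ which is then squared against the $L_2$-bound.

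For (i), I would apply Young with $a=\chi^2$ and $\sigma=\delta$ (the exponent given by $\chi\in\mathcal H_2$) to get $\chi^2\hbar\le\delta^{-1}(e^{\delta\chi^2}-1)+\delta^{-1}\ell(\hbar)$; the $\chi\ge 1$/$\chi<1$ split bounds the exponential integral, $Q(\hbar)\le N$ bounds the entropy, and the ``$+1$'' summand from $\hbar+1$ is just $\|\chi\|_{L_2(\nu_T)}^2$. For (ii), write $|\hbar-1|=(1-\hbar)_+ + (\hbar-1)_+$. The first piece is handled by Cauchy--Schwarz and Pinsker, giving $\int\chi(1-\hbar)_+\,d\nu_T\le\|\chi\|_{L_2(\nu_T)}\sqrt{2N}$. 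For the second, fix a threshold $M$ and split at $\{\hbar\le M\}$ (use Pinsker plus Cauchy--Schwarz, getting $\|\chi\|_{L_2(\nu_T)}\sqrt{2MN}$) versus $\{\hbar>M\}$ (apply Young with $\sigma=\varpi$, bound the exponential by the $\{\chi\ge 1\}$/$\{\chi<1\}$ dichotomy using $\chi\in\mathcal H^\varpi$, and the entropy by $N$).

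For (iii), the same decomposition makes every summand absolutely continuous with respect to $\mathrm{Leb}_T$: the Cauchy--Schwarz terms vanish as $\mathrm{Leb}_T(O)\to 0$ because $\int_{O\times Z}\chi^2\,d\nu_T\to 0$; the Young terms vanish because $\chi\in\mathcal H^\infty$ allows arbitrarily large $\varpi$, so $\int_{O\times Z}(e^{\varpi\chi}-1)\,d\nu_T\to 0$ by dominated convergence (bound $e^{\varpi\chi}$ by a higher-$\varpi$ version which is integrable). For (iv), fix $\varpi$ and exploit $L_2$-tightness and $\mathcal H^\infty$-tightness simultaneously: choose a compact $K_\varepsilon\subset Z$ with both $\|\chi\mathbf 1_{[0,T]\times K_\varepsilon^c}\|_{L_2(\nu_T)}^2$ and $\int_{[0,T]\times K_\varepsilon^c}(e^{\varpi\chi}-1)\,d\nu_T$ arbitrarily small, then rerun the (ii) decomposition on $K_\varepsilon^c$. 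For (v), on the compact $K$ we have $\int_{[0,T]\times K}e^{\varpi\chi}\,d\nu_T<\infty$ for every $\varpi$; the truncated function $\chi\mathbf 1_{\chi\ge j}$ is dominated by $\chi$ and tends to $0$ pointwise, so dominated convergence in the Young bound $\chi\mathbf 1_{\chi\ge j}\hbar\le\sigma^{-1}(e^{\sigma\chi}-1)\mathbf 1_{\chi\ge j}+\sigma^{-1}\ell(\hbar)$, applied after swapping $\sigma$ for any fixed exponent within $\mathcal H^\infty$, yields \eqref{Lf}.

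\emph{Main obstacle.} The principal delicacy is that $\nu_T$ is only $\sigma$-finite, so exponential integrability of $\chi$ is only postulated on finite-measure sets, whereas the lemma demands bounds over all of $[0,T]\times Z$. The $\{\chi\ge 1\}$/$\{\chi<1\}$ split bridges this gap, but in parts (iii)--(v) the book-keeping is subtle because the bound must be uniform over $\hbar\in S^N$, while the partition of $[0,T]\times Z$ depends only on $\chi$; one must therefore choose the truncation thresholds in $\hbar$ (the $M$ above) and in $Z$ (the compact $K_\varepsilon$) independently of $\hbar$, and verify that both the Pinsker--Cauchy--Schwarz piece and the Young piece admit a common sweep-out.
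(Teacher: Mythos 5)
The paper does not prove this lemma in-house: parts (i), (ii), (iv), (v) are cited from the proofs of (3.3), (3.4), (3.23), (3.26) in [BCDSPA], and part (iii) from [YZZ, Remark~2], together with $\mathcal H_2\subset\mathcal H^\infty$ and $\nu(K)<\infty$ for compact $K$. Your self-contained proposal reconstructs exactly the argument of those references: the conjugate pair $\ell^*(x)=e^x-1$ and the scaled Young inequality $ab\le \sigma^{-1}(e^{\sigma a}-1)+\sigma^{-1}\ell(b)$; the quadratic lower bounds $\ell(y)\ge\tfrac12(1-y)^2$ on $[0,1]$ and $\ell(y)\ge(2M)^{-1}(y-1)^2$ on $[1,M]$; the linear tail bound $\ell(y)\ge y(\log M-1)$ for $y\ge M>e$ giving $\nu_T(\{\hbar>M\})\le N/(M(\log M-1))$; and the $\{\chi\ge1\}/\{\chi<1\}$ split to bridge the $\sigma$-finiteness gap. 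Those are precisely the right tools, the decompositions are correct, and the closing remark about choosing the thresholds $M$, $K_\varepsilon$, $\varpi$ independently of $\hbar$ is the correct caveat.

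One step as written would not survive scrutiny and should be repaired. In (iii) you claim that ``$\int_{O\times Z}(e^{\varpi\chi}-1)\,d\nu_T\to 0$ by dominated convergence.'' In general $e^{\varpi\chi}-1\ge\varpi\chi$ need not be $\nu_T$-integrable on all of $[0,T]\times Z$: the $\mathcal H^\varpi$ hypothesis gives integrability of $e^{\varpi\chi}$ only on finite-$\nu_T$-measure sets, and $\chi\in L_2(\nu_T)$ does not imply $\chi\in L_1(\nu_T)$. The Young bound on $\chi\hbar$ must therefore be kept restricted to $\{\hbar>M\}$, where it decomposes as $\int_{O\cap\{\chi\ge1\}}e^{\varpi\chi}$ (finite, and tending to $0$ with ${\rm Leb}_T(O)\to 0$ by Fubini plus absolute continuity, since $t\mapsto\nu(\{\chi(t,\cdot)\ge1\})\in L^1[0,T]$) plus, on $\{\chi<1\}$, the bound $(e^\varpi-1)\int_{O\cap\{\hbar>M\}}\chi\le(e^\varpi-1)\|\chi\mathbf 1_{O\times Z}\|_{L_2}\sqrt{N/(M(\log M-1))}$ via Cauchy--Schwarz. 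Also, the role of $\mathcal H^\infty$ is not to dominate $e^{\varpi\chi}$ by a higher-$\varpi$ version; it is to let one send $\varpi\to\infty$ \emph{after} the $O$-limit, so as to kill the leftover $\varpi^{-1}N$ entropy remainder, which does not shrink with $O$. The correct limit order is: fix $\varpi$, fix $M$, let ${\rm Leb}_T(O)\to0$, obtaining $\limsup\le\varpi^{-1}N$; then $\varpi\to\infty$. The same ordering applies in (iv) with $K_\varepsilon^c$ in place of $O$, and in (v) with $\{\chi\ge j\}$ in place of $O$, where now $\nu_T([0,T]\times K)<\infty$ makes the dominated-convergence step literally available. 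With these repairs the proposal matches the cited proofs.
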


\begin{remark}
The proofs of \eqref{eq lemma-est-ass 01},  \eqref{eq lemma-est-ass 02}, \eqref{eq zhai 1} and \eqref{Lf} can be found in the proofs of \cite[(3.3), (3.4), (3.23), (3.26)]{BCDSPA} respectively and use \eqref{eq H infinit H} and the fact that for any compact subset $K\subset Z$, $\nu(K)<+\infty$. \eqref{eq lemma-est-ass 03} is quoted from \cite[Remark 2]{YZZ}, 
 which is a little more general than \cite[(3.5)]{BCDSPA}, i.e.,
\begin{eqnarray*}\label{Ineq SN}
\lim _{\delta \rightarrow 0} \sup _{\hbar \in S^{N}} \sup _{0\leq s\leq t\leq T\atop t-s \leq \delta} \int_s^t\int_Z \chi(t,z)|\hbar(r,z)-1| \nu(dz)  dr=0.
\end{eqnarray*}
\end{remark}

\section{Well-posedness for the skeleton equation}\label{wellposedness for skeleton equation}
Before verifying the \textbf{(LDP1)} and \textbf{(LDP2)}, we need to prove the following proposition, i.e., the existence and uniqueness of solutions to the skeleton equation \eqref{eq Skeleton-1}.
\begin{proposition}\label{Prop 4.4}
Suppose that conditions $\textbf{(H1)}$-$\textbf{(H7)}$ hold. For any $g\in S$, there exists a unique solution $Y^{g}\in C([0,T];H)\cap L^\alpha([0,T];V)$
satisfying \eqref{eq Skeleton-1}. Moreover,
\begin{eqnarray}\label{eq uniform Skelekon}
\sup_{g\in S^N}\Big(\sup_{t\in[0,T]}\|Y^{g}_t\|^2_H+\int_0^T\|Y^{g}_t\|_V^\alpha dt\Big)\leq C_N<+\infty,~~\forall N\in\mathbb{N},
\end{eqnarray}
where $C_N$ is defined as in \eqref{uniform Xg}.
\end{proposition}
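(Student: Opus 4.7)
The plan is to reformulate \eqref{eq Skeleton-1} as a purely deterministic variational equation with augmented drift
$$\widetilde{\mathcal{A}}^g(s,v):=\mathcal{A}(s,v)+\int_Z f(s,v,z)\big(g(s,z)-1\big)\,\nu(dz),$$
and then follow the deterministic version of the Galerkin + local-monotonicity scheme that underlies Proposition~\ref{thm solution 01} (i.e.\ \cite{BLZnolinear} without the noise term). The first task is to verify that $\widetilde{\mathcal{A}}^g$ inherits \textbf{(H1)}--\textbf{(H5)} up to replacing the constant $F$ by a $g$-dependent function $\widetilde F^g\in L^1([0,T];\mathbb R^+)$. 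By \textbf{(H6)} and Cauchy--Schwarz, the forcing contributes a term dominated by $\bigl(\int_Z L_f(s,z)|g-1|\nu(dz)\bigr)(1+\|v\|_H^2)$, and Lemma~\ref{lemma-est-ass}(ii) applied with $\chi=L_f\in\mathcal H_2\cap L_2(\nu_T)\subset\mathcal H^\infty\cap L_2(\nu_T)$ (by \eqref{eq H infinit H}) gives $\sup_{g\in S^N}\int_0^T\int_Z L_f|g-1|\nu(dz)ds<\infty$. Analogously, using \textbf{(H7)} and the same lemma with $\chi=G_f$, the local monotonicity \textbf{(H2)} is preserved for $\widetilde{\mathcal{A}}^g$ with an enlarged $F$; \textbf{(H1)} and \textbf{(H4)} transfer directly since the forcing is of lower order in the $V^*$-norm.

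Next, pick a basis $\{e_i\}\subset V$ whose span is dense in $V$ (hence in $H$), let $P_n$ be the $H$-orthogonal projection onto $V_n:=\mathrm{span}(e_1,\ldots,e_n)$, and consider the Carath\'eodory ODE
$$\tfrac{d}{dt}Y^{g,n}_t=P_n\widetilde{\mathcal{A}}^g\big(t,Y^{g,n}_t\big),\qquad Y^{g,n}_0=P_nx,$$
which admits a local solution by \textbf{(H1)} and \textbf{(H4)}. Testing against $Y^{g,n}$ and using the augmented coercivity gives
$$\|Y^{g,n}_t\|_H^2+\theta\int_0^t\|Y^{g,n}_s\|_V^\alpha\,ds\leq\|x\|_H^2+\int_0^t\widetilde F^g_s\bigl(1+\|Y^{g,n}_s\|_H^2\bigr)\,ds,$$
and Gr\"onwall produces the bound
$$\sup_n\sup_{g\in S^N}\Big(\sup_{t\in[0,T]}\|Y^{g,n}_t\|_H^2+\int_0^T\|Y^{g,n}_s\|_V^\alpha\,ds\Big)\leq C_N<\infty,$$
which furnishes the constant $C_N$ in \eqref{eq uniform Skelekon} once it has been transferred to the limit by weak lower semicontinuity.

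The main obstacle is the passage to the limit, because the embedding $V\hookrightarrow H$ is not assumed compact, so strong convergence of $Y^{g,n}$ in $L^2([0,T];H)$ is unavailable to identify the nonlinear limits. I would resolve this via the Minty--Browder monotonicity trick applied to $\widetilde{\mathcal{A}}^g$. Using the a priori bounds together with \textbf{(H4)}, extract along a subsequence $Y^{g,n}\rightharpoonup Y^g$ in $L^\alpha([0,T];V)$ and $\widetilde{\mathcal{A}}^g(\cdot,Y^{g,n})\rightharpoonup\Phi$ in $L^{\alpha/(\alpha-1)}([0,T];V^*)$, with $Y^{g,n}_t\rightharpoonup Y^g_t$ in $H$ for a.e.\ $t$. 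The limit $Y^g$ then satisfies the equation with $\Phi$ in place of $\widetilde{\mathcal{A}}^g(\cdot,Y^g)$, and the Lions regularity of the triple $V\subset H\subset V^*$ yields $Y^g\in C([0,T];H)$ together with the chain rule for $\|Y^g_\cdot\|_H^2$. To identify $\Phi=\widetilde{\mathcal{A}}^g(\cdot,Y^g)$, apply the chain rule to $e^{-\int_0^t(\widetilde F^g_s+\rho(\phi_s))ds}\|Y^{g,n}_t-\phi_t\|_H^2$ for $\phi\in L^\alpha([0,T];V)\cap L^\infty([0,T];H)$, invoke the local monotonicity of $\widetilde{\mathcal{A}}^g$, let $n\to\infty$ using weak lower semicontinuity of $\|\cdot\|_H$, and then test with $\phi=Y^g+\lambda w$ ($\lambda\to0$) together with hemicontinuity \textbf{(H1)}.

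Uniqueness is a routine Gr\"onwall argument: for two solutions $Y,\widetilde Y$, the chain rule combined with \textbf{(H2)}, \textbf{(H7)} and Lemma~\ref{lemma-est-ass}(ii) (with $\chi=G_f\in\mathcal H^{\varpi_0}\cap L_2(\nu_T)$) yields
$$\|Y_t-\widetilde Y_t\|_H^2\leq\int_0^t\Big(F_s+\rho(\widetilde Y_s)+2\!\int_Z G_f(s,z)|g(s,z)-1|\nu(dz)\Big)\|Y_s-\widetilde Y_s\|_H^2\,ds,$$
and \textbf{(H5)} together with the a priori bound on $\widetilde Y$ makes $s\mapsto\rho(\widetilde Y_s)$ integrable on $[0,T]$, closing the argument. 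The hard step is therefore Step~3: the \emph{uniform-in-$g\in S^N$} integrability of all the $g$-dependent coefficients produced by Lemma~\ref{lemma-est-ass}(ii) is the key ingredient that lets the monotonicity machinery replace the usual compactness argument.
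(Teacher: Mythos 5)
Your route is genuinely different from the paper's. The paper does \emph{not} run Galerkin--Minty on the augmented operator. It instead performs a Picard iteration: (Step~1) for each frozen $J\in C([0,T];H)\cap L^\alpha([0,T];V)$ it solves \eqref{ZY}, where the inhomogeneity $h_J(t):=\int_Z f(t,J(t),z)(g(t,z)-1)\,\nu(dz)$ is a \emph{fixed} $L^1([0,T];H)$-valued function and plays no role in the Galerkin growth bound; (Step~2) it shows $J\mapsto Z^J$ is a contraction in $\sup_{s\le l_1}\|\cdot\|_H$ on a short interval $[0,l_1]$, giving a local solution by the Banach fixed-point theorem; (Step~3) it derives a global a~priori bound via Gronwall, yielding \eqref{eq uniform Skelekon} and allowing the local solution to be continued to $[0,T]$.

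The gap in your argument is the assertion that ``\textbf{(H4)} transfers directly.'' Writing $L_f^g(t):=\int_Z L_f(t,z)|g(t,z)-1|\,\nu(dz)$, the forcing contributes a term $\lesssim (L_f^g(t))^{\alpha/(\alpha-1)}(1+\|v\|_H)^{\alpha/(\alpha-1)}$ to $\|\widetilde{\mathcal{A}}^g(t,v)\|_{V^*}^{\alpha/(\alpha-1)}$, so the transferred growth condition would need $(L_f^g)^{\alpha/(\alpha-1)}\in L^1([0,T])$, i.e.\ $L_f^g\in L^{\alpha/(\alpha-1)}([0,T])$. Lemma~\ref{lemma-est-ass}(ii), however, only provides $L_f^g\in L^1([0,T])$ uniformly over $g\in S^N$, and no higher time-integrability is available from \textbf{(H6)} alone. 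Consequently, you cannot immediately extract $\widetilde{\mathcal{A}}^g(\cdot,Y^{g,n})\rightharpoonup\Phi$ in $L^{\alpha/(\alpha-1)}([0,T];V^*)$, which your Minty argument requires. This is repairable: split the weak limits, applying \textbf{(H4)} to $\mathcal{A}(\cdot,Y^{g,n})$ alone to get $\Phi_1\in L^{\alpha/(\alpha-1)}([0,T];V^*)$, and use the uniform integrability of $t\mapsto\|\int_Z f(t,Y^{g,n}_t,z)(g-1)\,\nu(dz)\|_H$ (Lemma~\ref{lemma-est-ass}(iii) plus the uniform $H$-bound) together with the vector-valued Dunford--Pettis theorem to get a weak-$L^1([0,T];H)$ limit of the forcing; the Minty identification with the augmented local-monotonicity constant then closes. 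But this also forces you to invoke a version of Lions' regularity/chain-rule lemma for drifts in $L^{\alpha/(\alpha-1)}([0,T];V^*)+L^1([0,T];H)$, which is available but is more than a direct citation. The paper's freeze-and-iterate structure was chosen precisely to avoid this: the inhomogeneity is never fed into \textbf{(H4)}, and only the original operator $\mathcal{A}$ enters the Galerkin--Minty machinery.
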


\begin{proof}
The proof is mainly divided into three steps.

(\emph{Step 1})~~For $J\in C([0,T];H)\cap L^\alpha([0,T];V)$, consider the following equation:
\begin{equation}\label{ZY}
\left\{ \begin{aligned}
&dZ^J_t=\mathcal{A}(t,Z^J_t)dt+\int_Zf(t,J(t),z)(g(t,z)-1)\nu(dz)dt,\  t\in [0,T],\\
&Z^J_0=x\in H.
\end{aligned} \right.
\end{equation}
In this step, we aim to prove the existence and uniqueness of solutions to \eqref{ZY} with $Z^J\in C([0,T];H)\cap L^\alpha([0,T];V)$ satisfying
\begin{eqnarray}\label{ZY integral}
Z^J_t=x+\int_0^t\mathcal{A}(s,Z^J_s)ds+\int_0^t\int_Zf(s,J(s),z)(g(s,z)-1)\nu(dz)ds~~\text{in}~V^*,
\end{eqnarray}
and
\begin{eqnarray}\label{uniform ZY}
\sup_{t\in [0,T]}\|Z^J_t\|_H^2+\theta\int_0^T\|Z^J_s\|_V^\alpha ds\leq C_T,
\end{eqnarray}
where $C_T$ is defined as in \eqref{Cl}.

Assume that $\{e_1, e_2, ...\}\subset V$ is an orthonormal basis of $H$ such that $span\{e_1, e_2, ...\}$ is dense in $V$. Denote $H_n:=span\{e_1,...,e_n\}$. Let $P_n: V^*\rightarrow H_n$ be defined by
$$P_ny:=\sum_{i=1}^n~ _{V^*}\langle y,e_i\rangle_V e_i,~~y\in V^*.$$
It is easy to see that $P_n|_H$ is just the orthogonal projection of $H$ onto $H_n$, and, for $u_1,u_2\in V$, $v\in H_n$, we have
\begin{eqnarray}\label{eq 20230828 01}
&&\!\!\!\!\!\!\!\!~_{V^*}\langle P_n \mathcal{A}(t,u_1)+\int_ZP_nf(t,u_2,z)(g(t,z)-1)\nu(dz),v\rangle_V\nonumber\\
=&&\!\!\!\!\!\!\!\!\langle P_n \mathcal{A}(t,u_1)+\int_ZP_nf(t,u_2,z)(g(t,z)-1)\nu(dz),v\rangle_H\\
=&&\!\!\!\!\!\!\!\!~_{V^*}\langle \mathcal{A}(t,u_1)+\int_Zf(t,u_2,z)(g(t,z)-1)\nu(dz),v\rangle_V.\nonumber
\end{eqnarray}
For each $n\in\Bbb{N}$, consider the following equation on $H_n$:
\begin{equation*}
\left\{ \begin{aligned}
&dZ^{J,n}_t=P_n\mathcal{A}(t,Z^{J,n}_t)dt+\int_ZP_nf(t,J(t),z)(g(t,z)-1)\nu(dz)dt,\\
&Z^{J,n}_0=P_nx\in H.
\end{aligned} \right.
\end{equation*}
From \cite[Theorem 3.1.1]{LR}, we know that there exists a unique solution $Z^{J,n}=\{Z^{J,n}_t\}_{t\in[0,T]}$ to the above equation satisfying the following integral equation:
\begin{eqnarray*}
Z^{J,n}_t=P_nx+\int_0^t P_n\mathcal{A}(s,Z^{J,n}_s)ds+\int_0^t\int_ZP_nf(s,J(s),z)(g(s,z)-1)\nu(dz)ds,~~\forall t\in[0,T].
\end{eqnarray*}
Applying the chain rule to $\|Z^{J,n}_t\|_H^2$, by \textbf{(H3)}, \textbf{(H6)} and \eqref{eq 20230828 01}, we have
\begin{eqnarray*}\label{ZYn}
\|Z^{J,n}_t\|_H^2=&&\!\!\!\!\!\!\!\!\|P_nx\|_H^2+2\int_0^t~_{V^*}\langle P_n\mathcal{A}(s,Z^{J,n}_s),Z^{J,n}_s\rangle_V ds\nonumber\\
&&\!\!\!\!\!\!\!\!+2\int_0^t~_{V^*}\langle \int_Z P_nf(s,J(s),z)\big(g(s,z)-1\big)\nu(dz),Z^{J,n}_s\rangle_V ds\nonumber\\
\leq&&\!\!\!\!\!\!\!\!\|x\|_H^2+\int_0^t F_sds+\int_0^tF_s\cdot\|Z^{J,n}_s\|_H^2ds-\int_0^t\theta\|Z^{J,n}_s\|_V^\alpha ds\nonumber\\
&&\!\!\!\!\!\!\!\!+2\int_0^t\int_ZL_f(s,z)(1+\|J(s)\|_H)\cdot\|Z^{J,n}_s\|_H\cdot |g(s,z)-1|\nu(dz)ds\nonumber\\
\leq&&\!\!\!\!\!\!\!\!\|x\|_H^2+\int_0^t F_sds+\int_0^tF_s\cdot\|Z^{J,n}_s\|_H^2ds-\int_0^t\theta\|Z^{J,n}_s\|_V^\alpha ds\nonumber\\
&&\!\!\!\!\!\!\!\!+2\big(1+\sup_{s\in[0,T]}\|J(s)\|_H\big)\int_0^tL_f(s)\cdot\big(\|Z^{J,n}_s\|^2_H+1)ds,~~\forall t\in[0,T],
\end{eqnarray*}
where
$$L_f(s):=\int_ZL_f(s,z)|g(s,z)-1|\nu(dz).$$
By \eqref{eq lemma-est-ass 02} we know that $L_f\in L^1([0,T];\Bbb{R}^+)$. Since $F\in L^1([0,T];\Bbb{R}^+)$ and $J\in C([0,T];H)$, by Gronwall's inequality, we have \begin{eqnarray*}
\sup_{t\in [0,T]}\|Z^{J,n}_t\|_H^2+\theta\int_0^T\|Z^{J,n}_s\|_V^\alpha ds\leq C_T,
\end{eqnarray*}
where
\begin{eqnarray}\label{Cl}
C_l=&&\!\!\!\!\!\!\!\!\Big(\|x\|_H^2+\int_0^l F_sds+2\big(1+\sup_{s\in [0,l]}\|J(s)\|_H\big)\int_0^lL_f(s)ds\Big)\nonumber\\
&&\!\!\!\!\!\!\!\!\cdot e^{\int_0^l F_sds+2(1+\sup_{s\in [0,l]}\|J(s)\|_H)\int_0^lL_f(s)ds},~~~~~l\in[0,T].
\end{eqnarray}
Following the similar ideas as \cite[Theorem 4.1]{BLZnolinear}, we obtain the existence and uniqueness of solutions to \eqref{ZY} satisfying \eqref{ZY integral} and \eqref{uniform ZY}. Besides, \eqref{uniform ZY} still holds with $T$ replaced by any $l\in [0,T]$.

(\emph{Step 2})~~Choosing $M\geq 4(1+\|x\|_H^2)$, by \eqref{Cl} there exists $0<l_0<T$ such that
\begin{eqnarray*}
\Big(\|x\|_H^2+\int_0^{l_0} F_sds+2(1+M)\int_0^{l_0}L_f(s)ds\Big)\cdot e^{\int_0^{l_0} F_sds+2(1+M)\int_0^{l_0}L_f(s)ds}\leq M.
\end{eqnarray*}
Define
$$\Lambda_{l_0,M}:=\{\chi\in C([0,l_0];H)\cap L^\alpha([0,l_0];V):\sup_{s\in[0,l_0]}\|\chi(s)\|_H+\theta\int_0^{l_0}\|\chi(s)\|_V^\alpha ds\leq M\}.$$
From \eqref{uniform ZY} we see that for any $J\in \Lambda_{l_0,M}$, one has $Z^J\in\Lambda_{l_0,M}$. Define a function $d:\Lambda_{l_0,M}\times \Lambda_{l_0,M}\rightarrow [0,+\infty)$ by
$$d(\chi^1,\chi^2)=\sup_{s\in[0,l_0]}\|\chi^1(s)-\chi^2(s)\|_H,~~\forall \chi^1,\chi^2\in \Lambda_{l_0,M}.$$
It is easy to see that $d$ is a metric on $\Lambda_{l_0,M}$. In fact, $(\Lambda_{l_0,M}, d)$ is a complete metric space. Before clarifying this, we remark that in this paper $\Lambda_{l_0,M}$ is not equipped with the following usual metric $$\tilde{d}(\chi^1,\chi^2):=\sup_{s\in[0,l_0]}\|\chi^1(s)-\chi^2(s)\|_H+\Big(\int_0^{l_0}\|\chi^1(s)-\chi^2(s)\|_V^\alpha ds\Big)^{\frac{1}{\alpha}},~~\forall \chi^1,\chi^2\in \Lambda_{l_0,M}.$$
Because as seen later in this step (see the arguments under \eqref{Z1Z2}), we only prove that the unique solution to \eqref{ZY} on $[0,l_1]$ (cf. \eqref{l1} for $l_1$) is a contraction from $(\Lambda_{l_1,M},d)$ to $(\Lambda_{l_1,M},d)$, which is sufficient to use the Banach fixed-point theorem. To clarify that $(\Lambda_{l_0,M}, d)$ is complete, we need to prove that every Cauchy sequence in $(\Lambda_{l_0,M},d)$ converges and its limit is in $\Lambda_{l_0,M}$. Suppose that $\{\chi_n\}_{n\in\Bbb{N}}$ is a Cauchy sequence  in $(\Lambda_{l_0,M},d)$, hence also a Cauchy sequence in $(C([0,l_0];H),\sup_{s\in[0,l_0]}\|\cdot\|_H)$. Then
there exists some $\chi\in C([0,l_0];H)$ such that as $n\rightarrow+\infty$,
$$\chi_n\longrightarrow \chi,~~\text{strongly in}~C([0,l_0];H)\subset L^\alpha([0,l_0];H).$$
Since $\theta\int_0^{l_0}\|\chi_n(s)\|_V^\alpha ds\leq M$, $\forall n\in\Bbb{N}$, there exists some $\bar{\chi}\in L^\alpha([0,l_0];V)$ such that as $n\rightarrow+\infty$,
$$\chi_n\longrightarrow\bar{\chi},~~\text{weakly in}~ L^\alpha([0,l_0];V)\subset L^\alpha([0,l_0];H).$$
Therefore, $\chi=\bar{\chi}$, $ds$-a.s.. Since $$\sup_{s\in[0,l_0]}\|\chi_n(s)\|_H+\theta\int_0^{l_0}\|\chi_n(s)\|_V^\alpha ds\leq M,$$
by weak lower semicontinuity of the norms, we may pass to the limit and get
$$
\sup_{s\in[0,l_0]}\|\chi(s)\|_H+\theta\int_0^{l_0}\|\chi(s)\|_V^\alpha ds\leq M.
$$

From Step 1, we know that for any $J_1, J_2\in \Lambda_{l_0,M}$, there exist  unique solutions to \eqref{ZY} with $J$ replaced by $J_1$ and $J_2$,  for simplicity, denoted as $Z^1:=Z^{J_1}$ and $Z^2:=Z^{J_2}$, respectively. We have for $t\in[0,l_0]$,
\begin{eqnarray*}
&&\!\!\!\!\!\!\!\!Z^1_t-Z^2_t\nonumber\\
=&&\!\!\!\!\!\!\!\!\int_0^t\mathcal{A}(s,Z^1_s)-\mathcal{A}(s,Z^2_s)ds+\int_0^t\int_Z\big(f(s,J_1(s),z)-f(s,J_2(s),z)\big)(g(s,z)-1)\nu(dz)ds.
\end{eqnarray*}
Applying the chain rule to $\|Z^1_t-Z^2_t\|_H^2$, by \textbf{(H2)} and \textbf{(H7)},
\begin{eqnarray}\label{Z1Z2}
&&\!\!\!\!\!\!\!\! \|Z^1_t-Z^2_t\|_H^2\nonumber\\
=&&\!\!\!\!\!\!\!\! 2\int_0^t ~_{V^*}\langle \mathcal{A}(s,Z^1_s)-\mathcal{A}(s,Z^2_s), Z^1_s-Z^2_s\rangle_Vds\nonumber\\
&&\!\!\!\!\!\!\!\!+2\int_0^t\Big\langle\int_Z\big(f(s,J_1(s),z)-f(s,J_2(s),z)\big)(g(s,z)-1)\nu(dz),Z^1_s-Z^2_s\Big\rangle_Hds\nonumber\\
\leq&&\!\!\!\!\!\!\!\!\int_0^t(F_s+\rho(Z^2_s))\|Z^1_s-Z^2_s\|_H^2ds\nonumber\\
&&\!\!\!\!\!\!\!\!+2\int_0^t\int_ZG_f(s,z)\|J_1(s)-J_2(s)\|_H\cdot|g(s,z)-1|\cdot\|Z^1_s-Z^2_s\|_H\nu(dz)ds\nonumber\\
\leq&&\!\!\!\!\!\!\!\!\int_0^t\big(F_s+\rho(Z^2_s)\big)\|Z^1_s-Z^2_s\|_H^2ds\nonumber\\
&&\!\!\!\!\!\!\!\!+\int_0^t\int_ZG_f(s,z)\|J_1(s)-J_2(s)\|_H^2\cdot|g(s,z)-1|\nu(dz)ds\nonumber\\
&&\!\!\!\!\!\!\!\!+\int_0^t\int_ZG_f(s,z)\|Z^1_s-Z^2_s\|_H^2\cdot|g(s,z)-1|\nu(dz)ds,~~\forall t\in[0,l_0].
\end{eqnarray}

By Gronwall's inequality, we know that for $l\in[0,l_0]$,
\begin{eqnarray*}
&&\!\!\!\!\!\!\!\! \sup_{t\in[0,l]}\|Z^1_t-Z^2_t\|_H^2\nonumber\\
\leq&&\!\!\!\!\!\!\!\!\sup_{t\in[0,l]}\|J_1(t)-J_2(t)\|_H^2\cdot\int_0^lG_f(s)ds\cdot \exp\Big\{\int_0^lF_s+\rho(Z^2_s)+G_f(s)ds\Big\},
\end{eqnarray*}
where
$$G_f(s):=\int_ZG_f(s,z)|g(s,z)-1|\nu(dz),$$
which by \eqref{eq lemma-est-ass 02} one has $G_f\in L^1([0,T];\Bbb{R}^+)$. Since $F\in L^1([0,T];\Bbb{R}^+)$, by \textbf{(H5)} and \eqref{uniform ZY},
\begin{eqnarray*}\label{exp}
&&\!\!\!\!\!\!\!\!\exp\Big\{\int_0^{l_0}F_s+\rho(Z^2_s)+G_f(s)ds\Big\}\nonumber\\
\leq&&\!\!\!\!\!\!\!\!\exp\Big\{\int_0^{l_0}F_s+C(1+\|Z^2_s\|_V^\alpha)(1+\|Z^2_s\|_H^\beta)+G_f(s)ds\Big\}\nonumber\\
:=&&\!\!\!\!\!\!\!\!C_{M,l_0,\theta,\int_0^TF_sds,\int_0^TG_f(s)ds}<+\infty.
\end{eqnarray*}
Therefore, choosing $l_1\in(0,l_0]$ such that
\begin{eqnarray}\label{l1}
\int_0^{l_1}G_f(s)ds\cdot C_{M,l_0,\theta,\int_0^TF_sds,\int_0^TG_f(s)ds}\leq\frac{1}{2},
\end{eqnarray}
which means the unique solution to \eqref{ZY} on $[0,l_1]$ can be regarded as a map from $\Lambda_{l_1,M}$ to $\Lambda_{l_1,M}$ and it is a contraction. Hence,  by the Banach fixed-point theorem, there exists a unique solution to \eqref{eq Skeleton-1}  on $[0,l_1]$ in the space $C([0,l_1];H)\cap L^\alpha([0,l_1];V)$.

(\emph{Step 3})~~Assume that there exists a unique solution to \eqref{eq Skeleton-1} on $[0,T]$, denote it as $Y^g$. Applying the chain rule to $\|Y^g_t\|_H^2$, by \eqref{triple}, \textbf{(H3)} and \textbf{(H6)},
\begin{eqnarray*}
\|Y^g_t\|_H^2=&&\!\!\!\!\!\!\!\!\|x\|_H^2+2\int_0^t~_{V^*}\langle \mathcal{A}(s,Y^g_s),Y^g_s\rangle_V ds\nonumber\\
&&\!\!\!\!\!\!\!\!+2\int_0^t~_{V^*}\langle \int_Z f(s,Y^g_s,z)\big(g(s,z)-1\big)\nu(dz),Y^g_s\rangle_V ds\nonumber\\
\leq&&\!\!\!\!\!\!\!\!\|x\|_H^2+\int_0^t F_sds+\int_0^tF_s\cdot\|Y^g_s\|_H^2ds-\int_0^t\theta\|Y^g_s\|_V^\alpha ds\nonumber\\
&&\!\!\!\!\!\!\!\!+4\int_0^tL_f(s)ds+4\int_0^t L_f(s)\cdot\|Y^g_s\|^2_Hds,~~~\forall t\in[0,T].
\end{eqnarray*}
Then,
\begin{eqnarray*}
&&\!\!\!\!\!\!\!\!\sup_{t\in[0,T]}\|Y^g_t\|_H^2+\theta\int_0^T\|Y^g_s\|_V^\alpha ds\nonumber\\
\leq&&\!\!\!\!\!\!\!\!\|x\|_H^2+\int_0^T F_s+4L_f(s)ds+\int_0^T(F_s+4L_f(s))\cdot\|Y^g_s\|_H^2ds.
\end{eqnarray*}
By Gronwall's inequality and \eqref{eq lemma-est-ass 02},
\begin{eqnarray}\label{uniform Xg}
&&\!\!\!\!\!\!\!\!\sup_{t\in[0,T]}\|Y^g_t\|_H^2+\int_0^T\|Y^g_s\|_V^\alpha ds\nonumber\\
\leq&&\!\!\!\!\!\!\!\!\big(\|x\|_H^2+\int_0^TF_s+4L_f(s)ds\big)\cdot e^{\int_0^TF_s+4L_f(s)ds}\nonumber\\
:=&&\!\!\!\!\!\!\!\!C_N,
\end{eqnarray}
 where $C_N$ is dependent on $\int_0^TF_sds$, $\int_0^TL_f(s)ds$, $N$ and $\|x\|_H$, but independent of $g$.

Combing the results in Step 2 and Step 3, using the standard arguments, we know that there exists a unique solution $Y^g\in C([0,T];H)\cap L^\alpha([0,T];V)$ satisfying \eqref{eq Skeleton-1} on $[0,T]$. \eqref{eq uniform Skelekon} follows from \eqref{uniform Xg}.
\end{proof}

\section{The verification of \textbf{(LDP1)}}\label{LDP 1}
Recall from \eqref{def G0} that
\begin{eqnarray*}
\Gamma:S\ni g \mapsto Y^{g}\in C([0,T];H)\cap L^\alpha([0,T];V),
\end{eqnarray*}
where $Y^{g}$ is the unique solution of the skeleton equation \eqref{eq Skeleton-1}. In this section, we aim to prove the following proposition.
\bp\label{Prop LDP1}
For any given $N\in\mathbb{N}$, let $\psi,\psi_n \in S^N$, $n\in\mathbb{N}$, be such that
$\psi_n\rightarrow \psi$ in $S^N$ as $n\rightarrow+\infty$. Then
$$
\lim_{n\rightarrow\infty}\sup_{t\in[0,T]}\|{\Gamma}(\psi_n)(t)-{\Gamma}(\psi)(t)\|_{H}=0.
$$

\begin{proof}
From Proposition \ref{Prop 4.4}, we know that $X^\psi:=\Gamma(\psi)$ is the unique solution to the following deterministic PDE
\begin{equation*}
\left\{ \begin{aligned}
&dX^\psi_t=\mathcal{A}(t,X^\psi_t)dt+\int_Zf(t,X^\psi_t,z)(\psi(t,z)-1)\nu(dz)dt,\\
&X^\psi_0=x\in H,
\end{aligned} \right.
\end{equation*}
while $X^{\psi_n}:=\Gamma(\psi_n)$ is the unique solution to the following deterministic PDE
\begin{equation*}
\left\{ \begin{aligned}
&dX^{\psi_n}_t=\mathcal{A}(t,X^{\psi_n}_t)dt+\int_Zf(t,X^{\psi_n}_t,z)(\psi_n(t,z)-1)\nu(dz)dt,\\
&X^{\psi_n}_0=x\in H.
\end{aligned} \right.
\end{equation*}
Applying the chain rule to $\|X^{\psi_n}_t-X^\psi_t\|_H^2$, by \eqref{triple},
\begin{eqnarray}\label{chain}
&&\!\!\!\!\!\!\!\!\|X^{\psi_n}_t-X^\psi_t\|_H^2\nonumber\\
=&&\!\!\!\!\!\!\!\!2\int_0^t~_{V^*}\langle\mathcal{A}(s,X^{\psi_n}_s)-\mathcal{A}(s,X^{\psi}_s),X^{\psi_n}_s-X^\psi_s\rangle_Vds\nonumber\\
&&\!\!\!\!\!\!\!\!+2\int_0^t\big\langle\int_Zf(s,X^{\psi_n}_s,z)\big(\psi_n(s,z)-1\big)-f(s,X^\psi_s,z)\big(\psi(s,z)-1\big)\nu(dz),\nonumber\\
&&\ \ \ \ \ \ \ \ \ \ \ \ \ \ \ \ \ \ \ \ \ \ \ \ \ \ \ \ \ \ \ \ \ \ \ \ \ \ \ \ \ \ \ \ \ \ \ \ \ X^{\psi_n}_s-X^\psi_s\big\rangle_Hds,~~~\forall t\in[0,T].
\end{eqnarray}
By \textbf{(H2)}, \textbf{(H5)} and \eqref{eq uniform Skelekon},
\begin{eqnarray}\label{chain1}
&&\!\!\!\!\!\!\!\!2\int_0^t~_{V^*}\langle\mathcal{A}(s,X^{\psi_n}_s)-\mathcal{A}(s,X^{\psi}_s),X^{\psi_n}_s-X^\psi_s\rangle_Vds\nonumber\\
\leq&&\!\!\!\!\!\!\!\!2\int_0^t\big(F_s+C(1+\|X^\psi_s\|_V^\alpha)(1+\|X^\psi_s\|_H^\beta)\big)\cdot\|X^{\psi_n}_s-X^\psi_s\|_H^2ds\nonumber\\
=&&\!\!\!\!\!\!\!\!2\int_0^t(F_s+C)\cdot\|X^{\psi_n}_s-X^\psi_s\|_H^2ds+2C\int_0^t\|X^\psi_s\|_V^\alpha\cdot\|X^{\psi_n}_s-X^\psi_s\|_H^2ds\nonumber\\
&&\!\!\!\!\!\!\!\!+2C\int_0^t\|X^\psi_s\|_H^\beta\cdot\|X^{\psi_n}_s-X^\psi_s\|_H^2ds\nonumber\\
&&\!\!\!\!\!\!\!\!+2C\int_0^t\|X^\psi_s\|_V^\alpha\cdot\|X^\psi_s\|_H^\beta\cdot\|X^{\psi_n}_s-X^\psi_s\|_H^2ds\nonumber\\
\leq&&\!\!\!\!\!\!\!\!2\int_0^t(F_s+C)\cdot\|X^{\psi_n}_s-X^\psi_s\|_H^2ds+2C\int_0^t\|X^\psi_s\|_V^\alpha\cdot\|X^{\psi_n}_s-X^\psi_s\|_H^2ds\nonumber\\
&&\!\!\!\!\!\!\!\!+2C\sup_{\hbar\in S^N}\sup_{t\in[0,T]}\|X^\hbar_t\|_H^\beta\cdot\int_0^t\|X^{\psi_n}_s-X^\psi_s\|_H^2ds\nonumber\\
&&\!\!\!\!\!\!\!\!+2C\sup_{\hbar\in S^N}\sup_{t\in[0,T]}\|X^\hbar_t\|_H^\beta\cdot\int_0^t\|X^\psi_s\|_V^\alpha\cdot\|X^{\psi_n}_s-X^\psi_s\|_H^2ds\nonumber\\
:=&&\!\!\!\!\!\!\!\!\int_0^t2(F_s+C_{N,\beta}\|X^\psi_s\|_V^\alpha+C_{N,\beta})\cdot\|X^{\psi_n}_s-X^\psi_s\|_H^2ds,\ \ \forall t\in[0,T].
\end{eqnarray}
Here $C_{N,\beta}$ only depends on $C_N$ appearing in \eqref{eq uniform Skelekon} and  $\beta$.
For simplicity, denote the second term in the right hand-side of \eqref{chain} as $I_n(t)$ and rewrite it as following:
\begin{eqnarray}\label{In}
&&\!\!\!\!\!\!\!\!I_n(t)\nonumber\\
:=&&\!\!\!\!\!\!\!\!2\int_0^t\Big\langle\int_Zf(s,X^\psi_s,z)\big((\psi_n(s,z)-1)-(\psi(s,z)-1)\big)\nu(dz),X^{\psi_n}_s-X^{\psi}_s\Big\rangle_Hds\nonumber\\
&&\!\!\!\!\!\!\!\!+2\int_0^t\Big\langle\int_Z\big(f(s,X^{\psi_n}_s,z)-f(s,X^{\psi}_s,z)\big)(\psi_n(s,z)-1)\nu(dz),X^{\psi_n}_s-X^{\psi}_s\Big\rangle_Hds\nonumber\\
:=&&\!\!\!\!\!\!\!\!Q_{n,1}(t)+Q_{n,2}(t).
\end{eqnarray}
Denote
$$G_n(s):=\int_ZG_f(s,z)|\psi_n(s,z)-1|\nu(dz).$$
From \eqref{eq lemma-est-ass 02} we know that
\begin{eqnarray}\label{eq S 20230915} \sup_{n\in\mathbb{N}}\int_0^TG_n(s)ds<\infty.
\end{eqnarray}
For $Q_{n,2}(t)$, from \textbf{(H7)} we have
\begin{eqnarray}\label{Qn2}
|Q_{n,2}(t)|\leq 2\int_0^t G_n(s)\|X^{\psi_n}_s-X^{\psi}_s\|_H^2ds.
\end{eqnarray}
Substituting \eqref{chain1}-\eqref{Qn2} to \eqref{chain}, we get
\begin{eqnarray*}
&&\!\!\!\!\!\!\!\!\|X^{\psi_n}_t-X^{\psi}_t\|_H^2\nonumber\\
\leq&&\!\!\!\!\!\!\!\!\int_0^t2\big(F_s+C_{N,\beta}\|X^\psi_s\|_V^\alpha+C_{N,\beta}+G_n(s)\big)\cdot\|X^{\psi_n}_s-X^{\psi}_s\|_H^2ds+Q_{n,1}(t),~~~\forall t\in[0,T].
\end{eqnarray*}
From \eqref{eq uniform Skelekon}
we know that $\|X^\psi\|_V^\alpha\in L^1([0,T];\Bbb{R}^+)$, hence by Gronwall's inequality and \eqref{eq S 20230915}, we obtain
\begin{eqnarray}\label{QN1}
&&\!\!\!\!\!\!\!\!\sup_{t\in[0,T]}\|X^{\psi_n}_t-X^{\psi}_t\|_H^2\nonumber\\
\leq&&\!\!\!\!\!\!\!\!\sup_{t\in[0,T]}|Q_{n,1}(t)|\cdot e^{2\int_0^TF_s+C_{N,\beta}\|X^\psi_s\|_V^\alpha+C_{N,\beta}+G_n(s)ds}\nonumber\\
:=&&\!\!\!\!\!\!\!\!C_{G_f,N,\beta,\alpha,T}\cdot\sup_{t\in[0,T]}|Q_{n,1}(t)|.
\end{eqnarray}
Here $C_{G_f,N,\beta,\alpha,T}$ is independent of $n$.

Now let us estimate $|Q_{n,1}(t)|$. By Lemma \ref{lemma-est-ass}, we know that for any $\varepsilon>0$, there exists a compact set $K_\varepsilon\subset Z$ such that \eqref{eq zhai 1} holds.
We rewrite
\begin{eqnarray}\label{Qn1}
&&\!\!\!\!\!\!\!\!Q_{n,1}(t)\nonumber\\
=&&\!\!\!\!\!\!\!\!2\int_0^t\int_{K_\varepsilon}\Big\langle f(s,X^\psi_s,z)\big(\psi_n(s,z)-\psi(s,z)\big),X^{\psi_n}_s-X^{\psi}_s\Big\rangle_H\nu(dz)ds\nonumber\\
&&\!\!\!\!\!\!\!\!+2\int_0^t\int_{K_\varepsilon^c}\Big\langle f(s,X^\psi_s,z)\big(\psi_n(s,z)-\psi(s,z)\big),X^{\psi_n}_s-X^{\psi}_s\Big\rangle_H\nu(dz)ds\nonumber\\
:=&&\!\!\!\!\!\!\!\!I_{n,1}(t)+I_{n,2}(t).
\end{eqnarray}
By \textbf{(H6)} and \eqref{eq uniform Skelekon},
\begin{eqnarray}\label{In2}
&&\!\!\!\!\!\!\!\!\sup_{t\in[0,T]}|I_{n,2}(t)|\nonumber\\
\leq&&\!\!\!\!\!\!\!\! 2\int_0^T\int_{K_\varepsilon^c}L_f(s,z)\cdot(\|X^\psi_s\|_H+1)\cdot|\psi_n(s,z)-\psi(s,z)|\cdot\|X^{\psi_n}_s-X^\psi_s\|_H\nu(dz)ds\nonumber\\
\leq&&\!\!\!\!\!\!\!\!2\sup_{s\in[0,T]}\Big[(\|X^\psi_s\|_H+1)\cdot\|X^{\psi_n}_s-X^\psi_s\|_H\Big]\cdot\nonumber\\
&&\ \ \ \ \Big(\int_0^T\int_{K_\varepsilon^c}L_f(s,z)|\psi_n(s,z)-1|\nu(dz)ds+\int_0^T\int_{K_\varepsilon^c}L_f(s,z)|\psi(s,z)-1|\nu(dz)ds\Big)\nonumber\\
\leq&&\!\!\!\!\!\!\!\! \varepsilon C_N.
\end{eqnarray}
To estimate $I_{n,1}(t)$, define
$$A_{L_f,J}=\{(s,z)\in[0,T]\times Z: L_f(s,z)\geq J\}.$$
In the following, for any $M\subset [0,T]\times Z$, denote $M^c:=[0,T]\times Z\setminus M$. Denote
\begin{eqnarray}\label{In1}
&&\!\!\!\!\!\!\!\!I_{n,1}(t)\nonumber\\
=&&\!\!\!\!\!\!\!\!2\int_0^t\int_{K_\varepsilon}\Big\langle f(s,X^\psi_s,z)\big(\psi_n(s,z)-\psi(s,z)\big),X^{\psi_n}_s-X^{\psi}_s\Big\rangle_H1_{A_{L_f,J}}(s,z)\nu(dz)ds\nonumber\\
&&\!\!\!\!\!\!\!\!+2\int_0^t\int_{K_\varepsilon}\Big\langle f(s,X^\psi_s,z)\big(\psi_n(s,z)-\psi(s,z)\big),X^{\psi_n}_s-X^{\psi}_s\Big\rangle_H1_{A^c_{L_f,J}}(s,z)\nu(dz)ds\nonumber\\
:=&&\!\!\!\!\!\!\!\!I_{n,1,J}(t)+I_{n,1,J^c}(t).
\end{eqnarray}
By \textbf{(H6)} and \eqref{eq uniform Skelekon},
\begin{eqnarray}\label{In1J}
&&\!\!\!\!\!\!\!\!\sup_{t\in[0,T]}|I_{n,1,J}(t)|\nonumber\\
\leq&&\!\!\!\!\!\!\!\!2\int_0^T\int_{K_\varepsilon}L_f(s,z)(\|X^\psi_s\|_H+1)\big(\psi_n(s,z)+\psi(s,z)\big)\cdot\nonumber\\
&&\ \ \ \ \ \ \ \ \ \ \ \ \ \ \ (\|X^{\psi_n}_s\|_H+\|X^\psi_s\|_H)1_{A_{L_f,J}}(s,z)\nu(dz)ds\nonumber\\
\leq&&\!\!\!\!\!\!\!\!2\sup_{s\in[0,T]}\big[(\|X^\psi_s\|_H+1)(\|X^{\psi_n}_s\|_H+\|X^\psi_s\|_H)\big]\cdot\nonumber\\
&&\ \ \ \ \ \ \ \ \ \ \ \ \ \ \ \int_0^T\int_{K_\varepsilon}L_f(s,z)\big(\psi_n(s,z)+\psi(s,z)\big)1_{A_{L_f,J}}(s,z)\nu(dz)ds\nonumber\\
\leq&&\!\!\!\!\!\!\!\!C_N\sup_{\hbar\in S^N}\int_0^T\int_{K_\varepsilon}L_f(s,z)\hbar(s,z)1_{A_{L_f,J}}(s,z)\nu(dz)ds,
\end{eqnarray}
by \eqref{Lf}, we know that for $\varepsilon>0$, there exists $J_\varepsilon>0$ such that
\begin{eqnarray*}
\sup_{\hbar\in S^N}\int_0^T\int_{K_\varepsilon}L_f(s,z)\hbar(s,z)1_{A_{L_f,J_\varepsilon}}(s,z)\nu(dz)ds\leq\frac{\varepsilon}{C_N},
\end{eqnarray*}
so choose $J$ in \eqref{In1J} to be $J_\varepsilon$, then \eqref{In1J} yields
\begin{eqnarray}\label{In1j}
\sup_{t\in[0,T]}|I_{n,1,J_\varepsilon}(t)|\leq\varepsilon.
\end{eqnarray}
Substituting \eqref{Qn1}-\eqref{In1j} to \eqref{QN1}, we get
\begin{eqnarray}\label{condition1}
&&\!\!\!\!\!\!\!\!\sup_{t\in[0,T]}\|X^{\psi_n}_t-X^{\psi}_t\|_H^2\nonumber\\
\leq&&\!\!\!\!\!\!\!\!C_{G_f,N,\beta,\alpha,T}\cdot\Big(\varepsilon+\sup_{t\in[0,T]}|I_{n,1,J^c_\varepsilon}(t)|+\varepsilon C_N\Big).
\end{eqnarray}
To estimate $|I_{n,1,J_\varepsilon^c}(t)|$, denote
\begin{eqnarray*}
U^n(s)=X^{\psi_n}_s-X^\psi_s,~~~U^n(\bar{s}_m)=X^{\psi_n}_{\bar{s}_m}-X^\psi_{\bar{s}_m},
\end{eqnarray*}
where
\begin{eqnarray*}
\bar{s}_m=t_{k+1}\equiv(k+1)T\cdot2^{-m},~~\text{for}~~s\in[kT2^{-m}, (k+1)T2^{-m}).
\end{eqnarray*}
Then
\begin{eqnarray}\label{sum}
\sup_{t\in[0,T]}|I_{n,1,J^c_\varepsilon}(t)|\leq\sum_{i=1}^4\tilde{I_i},
\end{eqnarray}
where
\begin{eqnarray*}
&&\!\!\!\!\!\!\!\!\tilde{I_1}=\!\!\sup_{t\in[0,T]}\Big|\!\!\int_0^t\!\!\int_{K_\varepsilon}\!\!\big\langle\!f(s,X^\psi_s,z)\big(\psi_n(s,z)-\psi(s,z)\big),U^n(s)-U^n(\bar{s}_m)\big\rangle_{H}1_{A^c_{L_f,J_\varepsilon}}(s,z)\nu(dz)ds\Big|,\\
&&\!\!\!\!\!\!\!\!\tilde{I_2}=\!\!\sup_{t\in[0,T]}\Big|\!\!\int_0^t\!\!\int_{K_\varepsilon}\!\!\big\langle\!\big(f(s,X^\psi_s,z)-f(s,X^\psi_{\bar{s}_m},z)\big)\big(\psi_n(s,z)-\psi(s,z)\big),U^n(\bar{s}_m)\big\rangle_{H}1_{A^c_{L_f,J_\varepsilon}}(s,z)\nu(dz)ds\Big|,\\
&&\!\!\!\!\!\!\!\!\tilde{I_3}=\!\!\sup_{1\leq k\leq2^m}\!\sup_{t_{k-1}\leq t\leq t_k}\!\Big|\!\!\int_{t_{k-1}}^t\!\!\int_{K_\varepsilon}\!\!\big\langle\!f(s,X^\psi_{\bar{s}_m},z)\big(\psi_n(s,z)-\psi(s,z)\big),U^n(\bar{s}_m)\big\rangle_{H}\!1_{A^c_{L_f,J_\varepsilon}}(s,z)\nu(dz)ds\Big|,\\
&&\!\!\!\!\!\!\!\!\tilde{I_4}=\!\!\sum_{k=1}^{2^m}\Big|\!\!\int_{t_{k-1}}^{t_k}\!\!\int_{K_\varepsilon}\!\!\big\langle\!f(s,X^\psi_{\bar{s}_m},z)\big(\psi_n(s,z)-\psi(s,z)\big),U^n(\bar{s}_m)\big\rangle_{H}1_{A^c_{L_f,J_\varepsilon}}(s,z)\nu(dz)ds\Big|.
\end{eqnarray*}
Note that $\tilde{I_i}, i=1,...,4$, are all dependent on $n,m,\varepsilon$. For simplicity, we omit these parameters.

Now, let us estimate $\tilde{I_i}, i=1,2,3,4$. From \cite[Remark 3.3]{BCDSPA} we know that for any $a, b\in (0,+\infty)$ and $\sigma\in [1,+\infty)$,
\begin{eqnarray}\label{ab}
ab\leq e^{\sigma a}+\frac{1}{\sigma}(b\log b-b+1)=e^{\sigma a}+\frac{1}{\sigma}\ell(b).
\end{eqnarray}
Choosing $a=1$ and $b=\psi_n(s,z)$ or $\psi(s,z)$ in \eqref{ab}, using \eqref{eq uniform Skelekon}, similarly as to get \cite[(5.23)]{WZSIAM},
\begin{eqnarray}\label{I1}
\tilde{I_1}\leq &&\!\!\!\!\!\!\!\!C_{N,T}J_\varepsilon\cdot e^{\sigma}\nu(K_\varepsilon)\Big[\big(\int_0^T\|X^{\psi_n}_s-X^{\psi_n}_{\bar{s}_m}\|_{H}^2ds\big)^{\frac{1}{2}}+\big(\int_0^T\|X^{\psi}_s-X^{\psi}_{\bar{s}_m}\|_{H}^2ds\big)^{\frac{1}{2}}\Big]\nonumber\\
&&\!\!\!\!\!\!\!\!+J_\varepsilon \cdot \frac{C_{N,T}}{\sigma}.
\end{eqnarray}
Here $C_{N,T}$ only depends on $C_N$ appearing in \eqref{eq uniform Skelekon} and $T$.

To estimate $\int_0^T\|X^{\psi_n}_s-X^{\psi_n}_{\bar{s}_m}\|_{H}^2ds$, note that for any $s\in[0,T]$,
\begin{eqnarray*}
X^{\psi_n}_{\bar{s}_m}-X^{\psi_n}_s=\int_s^{\bar{s}_m}\mathcal{A}(t,X^{\psi_n}_t)dt+\int_s^{\bar{s}_m}\int_Zf(t,X^{\psi_n}_t,z)\big(\psi_n(t,z)-1\big)\nu(dz)dt.
\end{eqnarray*}
Applying the chain rule to $\|X^{\psi_n}_{\bar{s}_m}-X^{\psi_n}_s\|_{H}^2$, by \eqref{triple},
\begin{eqnarray*}
&&\!\!\!\!\!\!\!\!\|X^{\psi_n}_{\bar{s}_m}-X^{\psi_n}_s\|_{H}^2\nonumber\\
=&&\!\!\!\!\!\!\!\!2\int_s^{\bar{s}_m}~_{V^*}\big\langle\mathcal{A}(t,X^{\psi_n}_t),X^{\psi_n}_t-X^{\psi_n}_s\big\rangle_Vdt\nonumber\\
&&\!\!\!\!\!\!\!\!+2\int_s^{\bar{s}_m}\big\langle\int_Zf(t,X^{\psi_n}_t,z)\big(\psi_n(t,z)-1\big)\nu(dz),X^{\psi_n}_t-X^{\psi_n}_s\big\rangle_Hdt,~~~\forall s\in[0,T].
\end{eqnarray*}
Integrating the above equality over $[0,T]$ with respect to $s$, we get
\begin{eqnarray*}
&&\!\!\!\!\!\!\!\!\int_0^T\|X^{\psi_n}_{\bar{s}_m}-X^{\psi_n}_s\|_{H}^2ds\nonumber\\
=&&\!\!\!\!\!\!\!\!2\int_0^T\int_s^{\bar{s}_m}~_{V^*}\big\langle\mathcal{A}(t,X^{\psi_n}_t),X^{\psi_n}_t-X^{\psi_n}_s\big\rangle_Vdtds\nonumber\\
&&\!\!\!\!\!\!\!\!+2\int_0^T\int_s^{\bar{s}_m}\big\langle\int_Zf(t,X^{\psi_n}_t,z)\big(\psi_n(t,z)-1\big)\nu(dz),X^{\psi_n}_t-X^{\psi_n}_s\big\rangle_Hdtds.
\end{eqnarray*}
By Young's inequality, \textbf{(H4)} and Fubini's theorem, there exists a positive constant $C_\alpha\in(0,+\infty)$, which only depends on $\alpha$ and may change from line to line, such that
\begin{eqnarray*}
&&\!\!\!\!\!\!\!\!2\int_0^T\int_s^{\bar{s}_m}~_{V^*}\big\langle\mathcal{A}(t,X^{\psi_n}_t),X^{\psi_n}_t-X^{\psi_n}_s\big\rangle_Vdtds\nonumber\\
\leq&&\!\!\!\!\!\!\!\!2\int_0^T\int_s^{\bar{s}_m}\frac{\|\mathcal{A}(t,X^{\psi_n}_t)\|_{V^*}^{\frac{\alpha}{\alpha-1}}}{\frac{\alpha}{\alpha-1}}+\frac{\|X^{\psi_n}_t-X^{\psi_n}_s\|_V^\alpha}{\alpha}dtds\nonumber\\
\leq&&\!\!\!\!\!\!\!\!\frac{2(\alpha-1)}{\alpha}\int_0^T\int_s^{\bar{s}_m}\big(F_t+C\|X^{\psi_n}_t\|_V^\alpha\big)\big(1+\|X^{\psi_n}_t\|_H^\beta\big)dtds\nonumber\\
&&\!\!\!\!\!\!\!\!+C_\alpha\int_0^T\int_s^{\bar{s}_m}\|X^{\psi_n}_t\|_V^\alpha+\|X^{\psi_n}_s\|_V^\alpha~dtds\nonumber\\
=&&\!\!\!\!\!\!\!\!\frac{2(\alpha-1)}{\alpha}\int_0^T\int_s^{\bar{s}_m}F_t+F_t\|X^{\psi_n}_t\|_H^\beta+C\|X^{\psi_n}_t\|_V^\alpha+C\|X^{\psi_n}_t\|_V^\alpha\|X^{\psi_n}_t\|_H^\beta~dtds\nonumber\\
&&\!\!\!\!\!\!\!\!+C_\alpha\int_0^T\int_s^{\bar{s}_m}\|X^{\psi_n}_t\|_V^\alpha~dtds+C_\alpha\int_0^T\int_s^{\bar{s}_m}\|X^{\psi_n}_s\|_V^\alpha~dtds\nonumber\\
\leq&&\!\!\!\!\!\!\!\!C_\alpha\int_0^T\int_s^{\bar{s}_m}F_t+\|X^{\psi_n}_t\|_V^\alpha~dtds+C_\alpha\cdot\Big[\sup_{\hbar\in S^N}\sup_{t\in[0,T]}\|X^{\hbar}_t\|_H^\beta\Big]\cdot\int_0^T\int_s^{\bar{s}_m}F_t+\|X^{\psi_n}_t\|_V^\alpha~dtds\nonumber\\
&&\!\!\!\!\!\!\!\!+\frac{T}{2^m}C_\alpha\cdot\sup_{\hbar\in S^N}\int_0^T\|X^{\hbar}_s\|_V^\alpha ds\nonumber\\
=&&\!\!\!\!\!\!\!\!C_\alpha\int_0^T\int_{\bar{s}_{m-1}}^tF_t+\|X^{\psi_n}_t\|_V^\alpha~dsdt+C_\alpha\cdot\Big[\sup_{\hbar\in S^N}\sup_{t\in[0,T]}\|X^{\hbar}_t\|_H^\beta\Big]\cdot\int_0^T\int_{\bar{s}_{m-1}}^tF_t+\|X^{\psi_n}_t\|_V^\alpha~dsdt\nonumber\\
&&\!\!\!\!\!\!\!\!+\frac{T}{2^m}C_\alpha\cdot\sup_{\hbar\in S^N}\int_0^T\|X^{\hbar}_s\|_V^\alpha ds\nonumber\\
\leq&&\!\!\!\!\!\!\!\!\frac{T}{2^m}C_\alpha\cdot\Big[\int_0^TF_tdt+\sup_{\hbar\in S^N}\int_0^T\|X^{\hbar}_t\|_V^\alpha dt\Big]\nonumber\\
&&\!\!\!\!\!\!\!\!+\frac{T}{2^m}C_\alpha\cdot\Big[\sup_{\hbar\in S^N}\sup_{t\in[0,T]}\|X^{\hbar}_t\|_H^\beta\Big]\cdot\Big[\int_0^TF_tdt+\sup_{\hbar\in S^N}\int_0^T\|X^{\hbar}_t\|_V^\alpha dt\Big]\nonumber\\
&&\!\!\!\!\!\!\!\!+\frac{T}{2^m}C_\alpha\cdot\sup_{\hbar\in S^N}\int_0^T\|X^{\hbar}_s\|_V^\alpha ds.
\end{eqnarray*}
Since $F\in L^1([0,T];\Bbb{R}^+)$ and \eqref{eq uniform Skelekon} holds, the above inequality can be dominated by $\frac{1}{2^m} C_{\alpha,T,N}$, where $C_{\alpha,T,N}$ only depends on $C_N$ appearing in \eqref{eq uniform Skelekon}, $T$ and $\alpha$.

By \textbf{(H6)} and \eqref{eq uniform Skelekon},
\begin{eqnarray*}
&&\!\!\!\!\!\!\!\!\int_0^T\int_s^{\bar{s}_m}\big\langle\int_Zf(t,X^{\psi_n}_t,z)\big(\psi_n(t,z)-1\big)\nu(dz),X^{\psi_n}_t-X^{\psi_n}_s\big\rangle_Hdtds\nonumber\\
\leq&&\!\!\!\!\!\!\!\!\int_0^T\int_s^{\bar{s}_m}\int_ZL_f(t,z)\big(1+\|X^{\psi_n}_t\|_H\big)\|X^{\psi_n}_t-X^{\psi_n}_s\|_H\cdot|\psi_n(t,z)-1|\nu(dz)dtds\nonumber\\
\leq&&\!\!\!\!\!\!\!\!4\Big[\sup_{\hbar\in S^N}\sup_{t\in[0,T]}\|X^\hbar_t\|_H^2+1\Big]\int_0^T\int_s^{\bar{s}_m}\int_ZL_f(t,z)\cdot|\psi_n(t,z)-1|\nu(dz)dtds\nonumber\\
\leq&&\!\!\!\!\!\!\!\!C_{N,T}\sup_{\hbar\in S^N}\sup_{s\in[0,T]}\int_s^{\bar{s}_m}\int_ZL_f(t,z)\cdot|\hbar(t,z)-1|\nu(dz)dt.
\end{eqnarray*}
Since by \eqref{eq lemma-est-ass 03},
\begin{eqnarray*}
\lim_{m\rightarrow+\infty}\sup_{\hbar\in S^N}\sup_{s\in[0,T]}\int_s^{\bar{s}_m}\int_ZL_f(t,z)\cdot|\hbar(t,z)-1|\nu(dz)dt=0,
\end{eqnarray*}
we then have
\begin{eqnarray}\label{limpsin}
\lim_{m\rightarrow+\infty}\sup_{n\in\Bbb{N}}\int_0^T\|X^{\psi_n}_{\bar{s}_m}-X^{\psi_n}_s\|_H^2ds=0.
\end{eqnarray}
Using similar arguments, we also have
\begin{eqnarray}\label{limpsi}
\lim_{m\rightarrow+\infty}\int_0^T\|X^{\psi}_{\bar{s}_m}-X^{\psi}_s\|_H^2ds=0.
\end{eqnarray}
Substituting \eqref{limpsin} and \eqref{limpsi} into \eqref{I1}, we get
\begin{eqnarray*}
\limsup_{m\rightarrow+\infty}\sup_{n\in\Bbb{N}}\tilde{I_1}\leq  J_\varepsilon\cdot\frac{C_{N,T}}{\sigma}.
\end{eqnarray*}
Since $\sigma$ is arbitrary in $[1,+\infty)$, we obtain
\begin{eqnarray*}\label{I1lim}
\lim_{m\rightarrow+\infty}\sup_{n\in\Bbb{N}}\tilde{I_1}=0.
\end{eqnarray*}
Similarly as to get \cite[(5.37), (5.39)]{WZSIAM}, we have
\begin{eqnarray*}\label{I2lim}
\lim_{m\rightarrow+\infty}\sup_{n\in\Bbb{N}}\tilde{I_2}=0,
\end{eqnarray*}
and
\begin{eqnarray*}\label{I3lim}
\limsup_{m\rightarrow+\infty}\sup_{n\in\Bbb{N}}\tilde{I_3}=0.
\end{eqnarray*}
Hence, for any $\gamma>0$, there exists $m_\gamma>0$ such that for all $m\geq m_\gamma$,
\begin{eqnarray}\label{I123}
\sum_{i=1}^3\sup_{n\in\mathbb{N}}\tilde{I_{i}}\leq \gamma.
\end{eqnarray}

Similarly as to get \cite[(5.46)]{WZSIAM}, we have
\begin{eqnarray}\label{limf}
\lim_{n\rightarrow+\infty}\Big|\int_{t_{k-1}}^{t_k}\!\!\int_{K_\varepsilon}\langle f(s,X^\psi_{\bar{s}_m},z),U^n(\bar{s}_m)\rangle_{H}\big(\psi_n(s,z)-\psi(s,z)\big)1_{A^c_{L_f,J_\varepsilon}}(s,z)\nu(dz)ds\Big|=0.
\end{eqnarray}
For the fixed $\gamma$ and $m_\gamma$ as above, \eqref{limf} implies that
\begin{eqnarray}\label{I6}
\lim_{n\rightarrow+\infty}\tilde{I_4}=0.
\end{eqnarray}
Then, by \eqref{sum}, \eqref{I123} and \eqref{I6},
\begin{eqnarray*}
\limsup_{n\rightarrow+\infty}\sup_{t\in[0,T]}|I_{n,1,J^c_\varepsilon}(t)|\leq\gamma,
\end{eqnarray*}
which implies
\begin{eqnarray}\label{In1Jc}
\lim_{n\rightarrow+\infty}\sup_{t\in[0,T]}|I_{n,1,J^c_\varepsilon}(t)|=0.
\end{eqnarray}
since $\gamma$ is arbitrary in $(0,+\infty)$.

Now, taking \eqref{In1Jc} into \eqref{condition1}, we finally obtain
\begin{eqnarray*}
\limsup_{n\rightarrow+\infty}\sup_{t\in[0,T]}\|X^{\psi_n}_t-X^\psi_t\|_H^2\leq C_{G_f,N,\beta,\alpha,T}\cdot(\varepsilon+C_N\cdot \varepsilon),
\end{eqnarray*}
since $\varepsilon$ is arbitrary in $(0,+\infty)$, it follows that
\begin{eqnarray*}
\lim_{n\rightarrow+\infty}\sup_{t\in[0,T]}\|X^{\psi_n}_t-X^\psi_t\|_H^2=0,
\end{eqnarray*}
which completes the proof of Proposition \ref{Prop LDP1}.
\end{proof}

\ep

\section{The verification of \textbf{(LDP2)} }\label{LDP 2}
Recall from \eqref{sol-control} that
$X^{\psi_\epsilon}:=\Gamma^\epsilon( N^{\epsilon^{-1}\psi_\epsilon})$ is the unique solution of \eqref{sol-control 01}. Hence \textbf{(LDP2)} is verified once the following proposition is proved.

\begin{proposition}\label{Prop LDP2}For any given $N\in\mathbb{N}$, let $\{\psi_\epsilon,~\epsilon>0\}\subset \mathcal{S}^N$. Then, for any $\delta>0$,
  \begin{align}\label{prop-2-main-eq}
\lim_{\epsilon\rightarrow0}\mathbb{P}\left(\sup_{t\in[0,T]}\|X^{\psi_\epsilon}_t-{\Gamma}(\psi_{\epsilon})(t)\|_{H}\geq \delta\right)=0.
\end{align}
\end{proposition}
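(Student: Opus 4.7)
The plan is to control the $H$-norm of $D^\epsilon_t := X^{\psi_\epsilon}_t - \Gamma(\psi_\epsilon)(t)$ via Ito's formula in the Gelfand triple, combined with a localization argument to tame the random weight $\rho(Y^{\psi_\epsilon}_s)$ and the random $V$-energy of $X^{\psi_\epsilon}$, while exploiting the prefactor $\epsilon$ to drive the purely stochastic contributions to zero. Subtracting \eqref{eq Skeleton-1} from \eqref{sol-control 01} and applying Ito's formula to $\|D^\epsilon_t\|_H^2$ produces the decomposition
\[
\|D^\epsilon_t\|_H^2 = I_1^\epsilon(t) + I_2^\epsilon(t) + \epsilon^2 I_3^\epsilon(t) + 2\epsilon M^\epsilon_t,
\]
where $I_1^\epsilon(t) := 2\int_0^t {}_{V^*}\langle \mathcal{A}(s,X^{\psi_\epsilon}_s) - \mathcal{A}(s,Y^{\psi_\epsilon}_s), D^\epsilon_s\rangle_V ds$ is controlled by \textbf{(H2)}, $I_2^\epsilon(t)$ is the $\nu$-integrated $f$-difference against $\psi_\epsilon - 1$ (controlled by \textbf{(H7)}), $I_3^\epsilon(t) := \int_0^t\int_Z \|f(s,X^{\psi_\epsilon}_{s-},z)\|_H^2 N^{\epsilon^{-1}\psi_\epsilon}(dz,ds)$ is the pure-jump quadratic variation (of size $\epsilon^2$ before compensation), and $M^\epsilon_t := \int_0^t\int_Z \langle D^\epsilon_{s-}, f(s,X^{\psi_\epsilon}_{s-},z)\rangle_H \widetilde{N}^{\epsilon^{-1}\psi_\epsilon}(dz,ds)$ is the compensated-Poisson cross-term martingale.

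Because \textbf{(H5)} allows $\rho$ to grow like $\|v\|_V^\alpha$, a uniform-in-$\omega$ Gronwall bound requires two separate inputs. On the skeleton side, Proposition \ref{Prop 4.4} together with the deterministic bound \eqref{eq uniform Skelekon} yields $\int_0^T \rho(Y^{\psi_\epsilon}_s)\,ds \leq C'_N$ uniformly over $\psi_\epsilon \in \mathcal{S}^N$. On the controlled side, an a priori Ito estimate on $\|X^{\psi_\epsilon}_t\|_H^2$ using \textbf{(H3)}, \textbf{(H6)}, Lemma \ref{lemma-est-ass}(i), and the BDG inequality (the $\epsilon^2$-prefactor compensates the $\epsilon^{-1}$-intensity factor) delivers $\sup_\epsilon \mathbb{E}\bigl[\sup_{t \leq T}\|X^{\psi_\epsilon}_t\|_H^2 + \int_0^T\|X^{\psi_\epsilon}_s\|_V^\alpha ds\bigr] < \infty$. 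Introduce the stopping times
\[
\tau_R^\epsilon := \inf\Big\{t \in [0,T] : \|X^{\psi_\epsilon}_t\|_H^2 + \int_0^t \|X^{\psi_\epsilon}_s\|_V^\alpha ds > R\Big\} \wedge T,
\]
so that $\sup_{\epsilon \in (0,1)} \mathbb{P}(\tau_R^\epsilon < T) \to 0$ as $R \to \infty$ by Chebyshev.

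On $[0,\tau_R^\epsilon]$, combining \textbf{(H2)}, \textbf{(H5)}, \textbf{(H7)}, Proposition \ref{Prop 4.4}, and Lemma \ref{lemma-est-ass}(ii) yields $I_1^\epsilon(t) + I_2^\epsilon(t) \leq \int_0^t K_\epsilon(s) \|D^\epsilon_s\|_H^2 ds$ with $\int_0^T K_\epsilon(s)\,ds \leq C_{N,R}$ bounded deterministically and uniformly in $\epsilon$ and $\psi_\epsilon \in \mathcal{S}^N$. Compensating the jump-variation term gives $\mathbb{E}[\epsilon^2 I_3^\epsilon(t \wedge \tau_R^\epsilon)] = \epsilon\,\mathbb{E}\int_0^{t\wedge \tau_R^\epsilon}\!\!\int_Z \|f\|_H^2 \psi_\epsilon \nu(dz)ds = O(\epsilon)$, using \textbf{(H6)}, Remark \ref{H6 remark}, and Lemma \ref{lemma-est-ass}(i) applied to $L_f^2$. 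For the martingale, BDG followed by Young's inequality gives $\mathbb{E}\sup_{t \leq T \wedge \tau_R^\epsilon} |\epsilon M^\epsilon_t| \leq \tfrac{1}{2}\mathbb{E}\sup_{t \leq T \wedge \tau_R^\epsilon}\|D^\epsilon_t\|_H^2 + C_{N,R} \epsilon$, so the quadratic factor can be absorbed. Gronwall then yields $\mathbb{E}\sup_{t \leq T \wedge \tau_R^\epsilon} \|D^\epsilon_t\|_H^2 \leq C'_{N,R} \epsilon \to 0$. A standard two-step argument (given $\eta > 0$, first choose $R$ so $\sup_\epsilon \mathbb{P}(\tau_R^\epsilon < T) < \eta/2$, then choose $\epsilon$ small enough) yields \eqref{prop-2-main-eq}.

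The main obstacle is the simultaneous interplay of three features not jointly handled in earlier works on L\'evy-driven LDPs: the absence of compact embeddings (which rules out tightness-based strategies and forces a direct pathwise contraction estimate), the purely local monotonicity \textbf{(H2)} with $\rho$ of polynomial growth (which requires combining Proposition \ref{Prop 4.4}'s deterministic bound on the skeleton with a stopping time on the controlled SPDE, since neither input alone suffices), and the jump quadratic variation whose natural scale $\epsilon^2 \cdot \epsilon^{-1} = \epsilon$ is acceptable only because the growth constant $L_f^2$ is $\psi_\epsilon$-integrable uniformly over $\mathcal{S}^N$ — which is exactly what Remark \ref{H6 remark} and Lemma \ref{lemma-est-ass}(i) are designed to deliver. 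Threading all three features through a single Ito--Gronwall energy estimate without losing the $\epsilon$-factor anywhere is the delicate part; none of the routine Wiener-noise or compact-embedding-L\'evy tricks applies directly.
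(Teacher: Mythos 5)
Your proposal follows the same Ito--Gronwall energy framework as the paper and rests on the same three pillars: (a) apply {\bf (H2)} with $v_2 = Y^{\psi_\epsilon}_s$ so that the $\rho$-weight lands on the skeleton solution and is controlled deterministically over $S^N$ via \eqref{eq uniform Skelekon} and {\bf (H5)}; (b) establish a separate a priori bound on $\mathbb{E}\sup_t\|X^{\psi_\epsilon}_t\|_H^2$; and (c) handle the compensated jump martingale by a Doob-type $L^1$ maximal inequality and absorb the resulting $\mathbb{E}\sup\|D^\epsilon\|^2$ contribution using the $\epsilon^{1/2}$ prefactor. To that extent your argument is correct and reaches the conclusion. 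The one genuine deviation is the stopping-time localization $\tau_R^\epsilon$, which the paper does not use and which is in fact superfluous. The reason it is not needed: once {\bf (H2)} is applied with $v_2=Y^{\psi_\epsilon}_s$, the Gronwall integrand is $F_s+\rho(Y^{\psi_\epsilon}_s)+G_\epsilon(s)$, whose time integral is bounded $\mathbb{P}$-a.s.\ by a constant $C_{N,\int_0^T F_s ds, C_{G_f}}$ that is \emph{independent of $R$} (your $C_{N,R}$ should not carry the subscript $R$, since $\rho$ never sees $X^{\psi_\epsilon}$). Consequently the a.s.\ Gronwall inequality is available on $[0,T]$ without stopping, and the controlled-solution energy $\mathbb{E}\sup_t\|X^{\psi_\epsilon}_t\|_H^2$ enters only through the expectations of the jump-quadratic-variation and martingale terms --- which the paper handles directly via \eqref{LDP2 ESTIMATE1} rather than by capping $\|X^{\psi_\epsilon}\|_H$ pathwise. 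Likewise, the $\int_0^T\|X^{\psi_\epsilon}_s\|_V^\alpha ds$ component of your stopping time never appears in any estimate (neither $\|f\|_H$ nor the Gronwall integrand involves $\|X^{\psi_\epsilon}\|_V$), so including it buys nothing. In short: your proof works, but the localization layer can be deleted wholesale, and doing so recovers the paper's cleaner $L^1$-convergence argument, from which convergence in probability is immediate.
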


\begin{proof}
For any given $N\in\mathbb{N}$, let $\{\psi_\epsilon,~\epsilon>0\}\subset \mathcal{S}^N$. Recall from \eqref{def G0} that   $
Y^{\psi_\epsilon}:=\Gamma(\psi_\epsilon) \in C([0,T],H)\cap L^\alpha([0,T],V)$   is the unique solution of the following equation
\begin{eqnarray*}\label{eq Skeleton-pro}
  Y^{\psi_\epsilon}_t=x+\int_0^t\mathcal{A}(s,Y^{\psi_\epsilon}_s)\,ds+\int_0^t\int_Zf(s,Y^{\psi_\epsilon}_s,z)(\psi_\epsilon(s,z)-1)\nu(dz)ds.
\end{eqnarray*}
Taking \eqref{sol-control 01} into account, we get
\begin{eqnarray*}
X^{\psi_\epsilon}_t-  Y^{\psi_\epsilon}_t
=&&\!\!\!\!\!\!\!\! \int_0^t\mathcal{A}(s,X^{\psi_\epsilon}_s)-\mathcal{A}(s,Y^{\psi_\epsilon}_s)\, ds+
  \epsilon\int_0^t\int_{ Z}f(s,X^{\psi_\epsilon}_{s-},z)\widetilde{N}^{\epsilon^{-1}\psi_\epsilon}(dz,ds)\\
  &&\!\!\!\!\!\!\!\!+\int_0^t\int_{ Z}\big(f(s,X^{\psi_\epsilon}_{s},z)-f(s,Y^{\psi_\epsilon}_s,z)\big)\big(\psi_\epsilon(s,z)-1\big)\nu(dz)ds
  ,\ \ \forall t\in[0,T].\nonumber
\end{eqnarray*}
By applying  the It\^o formula, we infer
\begin{eqnarray*}
\|X^{\psi_\epsilon}_t-  Y^{\psi_\epsilon}_t\|_H^2\nonumber
 =&&\!\!\!\!\!\!\!\!2\int_0^t ~_{V^*}\langle \mathcal{A}(s,X^{\psi_\epsilon}_s)-\mathcal{A}(s,Y^{\psi_\epsilon}_s), X^{\psi_\epsilon}_s - Y^{\psi_\epsilon}_s\rangle_V\;ds \\
 &&\!\!\!\!\!\!\!\! +2\epsilon \int_0^t\int_{ Z} \langle  f(s,X^{\psi_\epsilon}_{s-},z),  X^{\psi_\epsilon}_{s-}-  Y^{\psi_\epsilon}_{s-}\rangle_H\; \widetilde{N}^{\epsilon^{-1}\psi_\epsilon}(dz,ds)\\
 &&\!\!\!\!\!\!\!\!+ 2\int_0^t\int_{ Z}\langle\big(f(s,X^{\psi_\epsilon}_{s},z)-f(s,Y^{\psi_\epsilon}_s,z)\big)\big(\psi_\epsilon(s,z)-1\big) ,X^{\psi_\epsilon}_s-  Y^{\psi_\epsilon}_s\rangle_H\; \nu(dz)ds\\
 &&\!\!\!\!\!\!\!\!+\epsilon^2 \int_0^t\int_{ Z}\|f(s,X^{\psi_\epsilon}_{s-},z)\|_H^2\; N^{\epsilon^{-1}\psi_\epsilon}(dz,ds),\ \ \forall t\in[0,T].
\end{eqnarray*}
Owing to {\bf{(H2)}} and {\bf {(H7)}}, we deduce
\begin{eqnarray}\label{prop-2-eq1}
\|X^{\psi_\epsilon}_t-  Y^{\psi_\epsilon}_t\|_H^2\nonumber
 \leq&&\!\!\!\!\!\!\!\! \int_0^t (F_s+\rho(Y^{\psi_\epsilon}_s ))\| X^{\psi_\epsilon}_s-Y^{\psi_\epsilon}_s\|^2_H\; ds\nonumber\\
 &&\!\!\!\!\!\!\!\! +2\epsilon \int_0^t\int_{ Z} \langle  f(s,X^{\psi_\epsilon}_{s-},z),  X^{\psi_\epsilon}_{s-}-  Y^{\psi_\epsilon}_{s-}\rangle_H\; \widetilde{N}^{\epsilon^{-1}\psi_\epsilon}(dz,ds)\nonumber\\
 &&\!\!\!\!\!\!\!\!+2\int_0^t\int_Z G_f(s,z)|\psi_\epsilon(s,z) -1|\cdot\|X^{\psi_\epsilon}_{s}-  Y^{\psi_\epsilon}_{s} \|^2_H \nu(dz)ds\nonumber\\
&&\!\!\!\!\!\!\!\!+\epsilon^2 \int_0^t\int_{ Z}\|f(s,X^{\psi_\epsilon}_{s-},z)\|_H^2\; N^{\epsilon^{-1}\psi_\epsilon}(dz,ds)\nonumber\\
 \leq&&\!\!\!\!\!\!\!\!2 \int_0^t (G_\epsilon(s)+F_s+\rho(Y^{\psi_\epsilon}_s ))\| X^{\psi_\epsilon}_s-Y^{\psi_\epsilon}_s\|^2_H\; ds\nonumber\\
 &&\!\!\!\!\!\!\!\!+2\epsilon \int_0^t\int_{ Z} \langle  f(s,X^{\psi_\epsilon}_{s-},z),  X^{\psi_\epsilon}_{s-}-  Y^{\psi_\epsilon}_{s-}\rangle_H\; \widetilde{N}^{\epsilon^{-1}\psi_\epsilon}(dz,ds)\nonumber\\
   &&\!\!\!\!\!\!\!\!+\epsilon^2 \int_0^t\int_{ Z}\|f(s,X^{\psi_\epsilon}_{s-},z)\|_H^2\; N^{\epsilon^{-1}\psi_\epsilon}(dz,ds),\ \ \forall t\in[0,T],
\end{eqnarray}
where $G_\epsilon(s):=\int_Z G_f(s,z)|\psi_\epsilon(s,z) -1| \nu(dz)$. From \eqref{eq lemma-est-ass 02} there exists a constant $C_{G_f}$ such that, for any $\epsilon\in(0,1]$,
\begin{eqnarray}\label{eq S 20230918} \int_0^TG_\epsilon(s) ds\leq C_{G_f}<+\infty, \ \mathbb{P}\text{-a.s.}.
\end{eqnarray}

On applying Gronwall's inequality, we find
\begin{eqnarray}\label{prop-2-eq1-20231008}
&&\!\!\!\!\!\!\!\!\sup_{t\in[0,T]}\|X^{\psi_\epsilon}_t-  Y^{\psi_\epsilon}_t\|_H^2\nonumber\\
 \leq&&\!\!\!\!\!\!\!\! \Big(2\epsilon \sup_{t\in[0,T]}\Big|\int_0^t\int_{ Z} \langle  f(s,X^{\psi_\epsilon}_{s-},z),  X^{\psi_\epsilon}_{s-}-  Y^{\psi_\epsilon}_{s-}\rangle_H\; \widetilde{N}^{\epsilon^{-1}\psi_\epsilon}(dz,ds)\Big|\nonumber\\
   &&\!\!\!\!\!\!\!\!\ \ \ \ \ \ \ \ \ \ \ \ \ +\epsilon^2 \int_0^T\int_{ Z}\|f(s,X^{\psi_\epsilon}_{s-},z)\|_H^2\; N^{\epsilon^{-1}\psi_\epsilon}(dz,ds)\Big)\cdot e^{2 \int_0^T G_\epsilon(s)+F_s+\rho(Y^{\psi_\epsilon}_s )ds}
   \nonumber\\
   \leq&&\!\!\!\!\!\!\!\!
   C_{N,\int_0^TF_sds,C_{G_f}}
  \Big(2\epsilon \sup_{t\in[0,T]}\Big|\int_0^t\int_{ Z} \langle  f(s,X^{\psi_\epsilon}_{s-},z),  X^{\psi_\epsilon}_{s-}-  Y^{\psi_\epsilon}_{s-}\rangle_H\; \widetilde{N}^{\epsilon^{-1}\psi_\epsilon}(dz,ds)\Big|\nonumber\\
   &&\ \ \ \ \ \ \ \ \ \ \ \ \ \ \ \ \ \ \ \ \ \ \ \ \ \ +\epsilon^2 \int_0^T\int_{ Z}\|f(s,X^{\psi_\epsilon}_{s-},z)\|_H^2\; N^{\epsilon^{-1}\psi_\epsilon}(dz,ds)\Big),\ \mathbb{P}\text{-a.s.}.
\end{eqnarray}
in which \eqref{eq S 20230918}, \eqref{eq uniform Skelekon} and {\bf (H5)} were used to get
$$
2 \int_0^T G_\epsilon(s)+F_s+\rho(Y^{\psi_\epsilon}_s )ds
\leq
C_{N,\int_0^TF_sds,C_{G_f}}<+\infty, \ \mathbb{P}\text{-a.s.},
$$
where $C_{N,\int_0^TF_sds,C_{G_f}}$ is a constant, and only depends on $C_N$ appearing in \eqref{eq uniform Skelekon},  $\int_0^TF_sds$ and $C_{G_f}$.

Applying Doob's inequality for $p=1$ (c.f. \cite[Theorem 1]{Novikov} or \cite[Proposition 2.2]{ZBL}) in the second term on the right-hand side of \eqref{prop-2-eq1} gives
\begin{eqnarray}\label{prop-2-eq2}
&&\!\!\!\!\!\!\!\!2\epsilon\,  \mathbb{E}\sup_{t\in[0,T]}\Big| \int_0^t\int_{ Z} \langle  f(s,X^{\psi_\epsilon}_{s-},z),  X^{\psi_\epsilon}_{s-}-  Y^{\psi_\epsilon}_{s-}\rangle_H\; \widetilde{N}^{\epsilon^{-1}\psi_\epsilon}(dz,ds)\Big|\nonumber\\
  \leq&&\!\!\!\!\!\!\!\! 2\epsilon\,  \mathbb{E}\Big[     \int_0^T\int_{ Z} \langle  f(s,X^{\psi_\epsilon}_{s},z),  X^{\psi_\epsilon}_{s}-  Y^{\psi_\epsilon}_{s}\rangle_H^2\;   \epsilon^{-1} \psi_\epsilon(s,z) \nu(dz)ds  \Big]^{\frac12}\nonumber\\
  \leq&&\!\!\!\!\!\!\!\!  2\epsilon^{\frac12}\,\mathbb{E}\Big[     \int_0^T\int_{ Z}2|L_f(s,z)|^2(1+ \|X^{\psi_\epsilon}_{s}\|_H^2)  \|X^{\psi_\epsilon}_{s}-  Y^{\psi_\epsilon}_{s}\|^2_H   \psi_\epsilon(s,z) \nu(dz)ds       \Big]^{\frac12}\nonumber\\
  \leq&&\!\!\!\!\!\!\!\!   2\epsilon^{\frac12}\,\mathbb{E}\Big[  \sup_{s\in[0,T]}\|X^{\psi_\epsilon}_{s}-  Y^{\psi_\epsilon}_{s}\|^2_H   \int_0^T\int_{ Z} 2|L_f(s,z)|^2(1+ \|X^{\psi_\epsilon}_{s}\|_H^2) \psi_\epsilon(s,z) \nu(dz)ds       \Big]^{\frac12}\nonumber \\
  \leq&&\!\!\!\!\!\!\!\!   \epsilon^{\frac12}\,\mathbb{E}\Big[  \sup_{s\in[0,T]}\|X^{\psi_\epsilon}_{s}-  Y^{\psi_\epsilon}_{s}\|^2_H    \Big]+  \epsilon^{\frac12}\,\mathbb{E}\Big[  \int_0^T\int_{ Z} 2|L_f(s,z)|^2(1+ \|X^{\psi_\epsilon}_{s}\|_H^2)\psi_\epsilon(s,z) \nu(dz)ds       \Big]\nonumber\\
  \leq&&\!\!\!\!\!\!\!\!   \epsilon^{\frac12}\,\mathbb{E}\Big[  \sup_{s\in[0,T]}\|X^{\psi_\epsilon}_{s}-  Y^{\psi_\epsilon}_{s}\|^2_H    \Big]+2  \epsilon^{\frac12}\mathbb{E}\Big[     \sup_{s\in[0,T]} (1+ \|X^{\psi_\epsilon}_{s}\|_H^2) \int_0^T\int_{ Z}  |L_f(s,z) |^2\psi_\epsilon(s,z)\, \nu(dz)ds  \Big]\nonumber\\
  \leq&&\!\!\!\!\!\!\!\! \epsilon^{\frac12}\,\mathbb{E}\Big[  \sup_{s\in[0,T]}\|X^{\psi_\epsilon}_{s}-  Y^{\psi_\epsilon}_{s}\|^2_H    \Big]+2 \epsilon^{\frac12} C_{L_f,N} \,\mathbb{E}\Big[     \sup_{s\in[0,T]} (1+ \|X^{\psi_\epsilon}_{s}\|_H^2)  \Big],
\end{eqnarray}
where $C_{L_f,N}:=\sup _{\hbar \in S^{N}}\int_0^T\int_{Z}|L_f(t,z)|^{2}(\hbar(t, z)+1) \nu(\dif z) \dif t<+\infty  $ by \eqref{eq lemma-est-ass 01} and we also used Assumption  {\bf{(H6)}} in the second inequality and Young's inequality  in the fourth inequality.

For the last term on the right-hand side of \eqref{prop-2-eq1}, using again Assumption {\bf{(H6)}}, we find
\begin{eqnarray}\label{prop-2-eq3}
&&\!\!\!\!\!\!\!\! \epsilon^2 \mathbb{E}\Big[ \int_0^T\int_{ Z}\|f(s,X^{\psi_\epsilon}_{s-},z)\|_H^2\; N^{\epsilon^{-1}\psi_\epsilon}(dz,ds)\Big]\nonumber\\
=&&\!\!\!\!\!\!\!\! \epsilon\,  \mathbb{E}\Big[ \int_0^T\int_{ Z}\|f(s,X^{\psi_\epsilon}_{s},z)\|_H^2\,\psi_\epsilon(s,z)\; \nu(dz)ds\Big]\nonumber\\
\leq&&\!\!\!\!\!\!\!\!   \epsilon\,  \mathbb{E}\Big[ \int_0^T\int_{ Z}2 |L_f(s,z)|^2(1+\|X^{\psi_\epsilon}_s\|_H^2)\,\psi_\epsilon(s,z)\; \nu(dz)ds\Big]\nonumber\\
\leq&&\!\!\!\!\!\!\!\!  2 \epsilon\, C_{L_f,N} \,\mathbb{E}\Big[     \sup_{s\in[0,T]} (1+ \|X^{\psi_\epsilon}_{s}\|_H^2)  \Big].
\end{eqnarray}
Combining \eqref{prop-2-eq2}, \eqref{prop-2-eq3} with \eqref{prop-2-eq1-20231008}, we obtain for any $\epsilon \in(0,1]$,
\begin{eqnarray}\label{LDP2 ESTIMATE}
&&\!\!\!\!\!\!\!\! \mathbb{E}\Big[\sup_{s\in[0,T]}  \|X^{\psi_\epsilon}_s-  Y^{\psi_\epsilon}_s\|_H^2\Big]\nonumber\\
\leq&&\!\!\!\!\!\!\!\! \epsilon^{\frac12}C_{N,\int_0^TF_sds,C_{G_f}}\,\mathbb{E}\Big[  \sup_{s\in[0,T]}\|X^{\psi_\epsilon}_{s}-  Y^{\psi_\epsilon}_{s}\|^2_H    \Big]\nonumber\\
&& \!\!\!\!\!\!\!\! +2 (\epsilon^{\frac12}+\epsilon)\, C_{L_f,\int_0^TF_sds,N,C_{G_f}} \,\mathbb{E}\Big[     \sup_{s\in[0,T]} (1+ \|X^{\psi_\epsilon}_{s}\|_H^2)  \Big],
\end{eqnarray}
where $C_{L_f,\int_0^TF_sds,N,C_{G_f}}$ is a constant, and only depends on $C_N$ appearing in \eqref{eq uniform Skelekon}, $L_f$, $\int_0^TF_sds$ and $C_{G_f}$.

By applying the It\^o formula to $\|X^{\psi_\epsilon}_t\|^2_H$, and using Assumptions {\bf{(H3)}} and {\bf{(H6)}}, we infer
\begin{eqnarray*}
&&\!\!\!\!\!\!\!\! \|X^{\psi_\epsilon}_t\|_H^2\nonumber\\
=&&\!\!\!\!\!\!\!\! \|x\|_H^2+2\int_0^t ~_{V^*}\langle \mathcal{A}(s,X^{\psi_\epsilon}_s), X^{\psi_\epsilon}_s \rangle_V\;ds+2\epsilon \int_0^t\int_{ Z} \langle  f(s,X^{\psi_\epsilon}_{s-},z),  X^{\psi_\epsilon}_{s-}\rangle_H\; \widetilde{N}^{\epsilon^{-1}\psi_\epsilon}(dz,ds)\nonumber\\
 &&\!\!\!\!\!\!\!\! + 2\int_0^t\int_{ Z}\langle f(s,X^{\psi_\epsilon}_{s},z)\big(\psi_\epsilon(s,z)-1\big) ,X^{\psi_\epsilon}_s\rangle_H\; \nu(dz)ds\nonumber\\
 &&\!\!\!\!\!\!\!\! +\epsilon^2 \int_0^t\int_{ Z}\|f(s,X^{\psi_\epsilon}_{s-},z)\|_H^2\; N^{\epsilon^{-1}\psi_\epsilon}(dz,ds)\nonumber\\
\leq &&  \!\!\!\!\!\!\!\!
 \|x\|_H^2+ \int_0^t\big(F_s+4L_\epsilon(s)\big)(1+\|X^{\psi_\epsilon}_s\|_H^2)ds+\epsilon^2 \int_0^t\int_{ Z}\|f(s,X^{\psi_\epsilon}_{s-},z)\|_H^2\; N^{\epsilon^{-1}\psi_\epsilon}(dz,ds)\nonumber\\
 &&\!\!\!\!\!\!\!\!  +2\epsilon \int_0^t\int_{ Z} \langle  f(s,X^{\psi_\epsilon}_{s-},z),  X^{\psi_\epsilon}_{s-}\rangle_H\; \widetilde{N}^{\epsilon^{-1}\psi_\epsilon}(dz,ds),,\ \ \forall t\in[0,T],
\end{eqnarray*}
where $L_\epsilon(t):=\int_ZL_f(t,z)\cdot|\psi_\epsilon(s,z)-1|\nu(dz)$. From \eqref{eq lemma-est-ass 02} there exists a constant $C_{L_f}$ such that, for any $\epsilon\in(0,1]$,
\begin{eqnarray*}
\int_0^TL_\epsilon(s) ds\leq C_{L_f}<+\infty,\ \mathbb{P}\text{-a.s.}.
\end{eqnarray*}
Gronwall's inequality and the above inequality now provide
\begin{eqnarray}\label{LDP2estimate}
&&\!\!\!\!\!\!\!\!\sup_{t\in[0,T]}\|X^{\psi_\epsilon}_t\|_H^2\nonumber\\
\leq && \!\!\!\!\!\!\!\!
\Big(\|x\|_H^2+\int_0^TF_sds+4C_{L_f}
+2\epsilon \sup_{t\in[0,T]}\Big|\int_0^t\int_{ Z} \langle  f(s,X^{\psi_\epsilon}_{s-},z),  X^{\psi_\epsilon}_{s-}\rangle_H\; \widetilde{N}^{\epsilon^{-1}\psi_\epsilon}(dz,ds)\Big|\nonumber\\
&&\!\!\!\!\!\!\!\!
\ \ \ \ \ \ \ \ \ \ \ \ \ \ \ \  +
 \epsilon^2 \int_0^T\int_{ Z}\|f(s,X^{\psi_\epsilon}_{s-},z)\|_H^2\; N^{\epsilon^{-1}\psi_\epsilon}(dz,ds)\Big)\cdot e^{\int_0^TF_sds+4C_{L_f}},\ \mathbb{P}\text{-a.s.}.
\end{eqnarray}

Similar to \eqref{prop-2-eq2}, in view of Doob's inequality  for $p=1$ and Assumption {\bf{(H6)}}, we obtain
\begin{eqnarray}
\label{prop-2-eq8}
&&\!\!\!\!\!\!\!\!2\epsilon\,\mathbb{E}\Big[\sup_{t\in[0,T]}\Big| \int_0^t\int_{ Z} \langle  f(s,X^{\psi_\epsilon}_{s-},z),  X^{\psi_\epsilon}_{s-}\rangle_H\; \widetilde{N}^{\epsilon^{-1}\psi_\epsilon}(dz,ds)\Big|\Big]\nonumber\\
\leq&&\!\!\!\!\!\!\!\!  2\epsilon^{\frac12}\,\mathbb{E}\Big[     \int_0^T\int_{ Z}2|L_f(s,z)|^2(1+ \|X^{\psi_\epsilon}_{s}\|_H^2)  \|X^{\psi_\epsilon}_{s}\|^2_H   \psi_\epsilon(s,z) \nu(dz)ds       \Big]^{\frac12}\nonumber\\
\leq &&\!\!\!\!\!\!\!\!4 \epsilon^{\frac12}\,C_{L_f,N}^{\frac12}\mathbb{E}\Big[\sup_{s\in[0,T]}(1+\|X^{\psi_\epsilon}_{s}\|^2_H)  \Big].
 \end{eqnarray}
Taking the estimates \eqref{prop-2-eq3} and \eqref{prop-2-eq8} into account, \eqref{LDP2estimate} yields
\begin{eqnarray*}
\mathbb{E}\Big[\sup_{s\in[0,T]}   \|X^{\psi_\epsilon}_s\|_H^2\Big]
\leq && \!\!\!\!\!\!\!\!
\Big(\|x\|_H^2+\int_0^TF_sds+4C_{L_f}
+4 \epsilon^{\frac12}\,C_{L_f,N}^{\frac12}\mathbb{E}\Big[\sup_{s\in[0,T]}(1+\|X^{\psi_\epsilon}_{s}\|^2_H)  \Big]\nonumber\\
&&\!\!\!\!\!\!\!\!  +2 \epsilon\, C_{L_f,N} \,\mathbb{E}\Big[     \sup_{s\in[0,T]} (1+ \|X^{\psi_\epsilon}_{s}\|_H^2)  \Big]
\Big)\cdot e^{\int_0^TF_sds+4C_{L_f}}.
 \end{eqnarray*}
 Let $\epsilon_0\in(0,1)$ be such that $(4\epsilon_0^{\frac12}\,C_{L_f,N}^{\frac12}+2 \epsilon_0\, C_{L_f,N})e^{\int_0^TF_sds+4C_{L_f}}\leq \frac12$. Then for any $\epsilon\in (0, \epsilon_0]$, we conclude
 \begin{eqnarray}\label{LDP2 ESTIMATE1}
\frac12 \mathbb{E}\Big[\sup_{s\in[0,T]}  \|X^{\psi_\epsilon}_s\|_H^2\Big] \leq &&\!\!\!\!\!\!\!\!\Big(\|x\|_H^2+\int_0^TF_sds+4C_{L_f}
\Big)\cdot e^{\int_0^TF_sds+4C_{L_f}}+\frac12\nonumber\\
:=&&\!\!\!\!\!\!\!\!C_{\|x\|_H,\int_0^TF_sds,C_{L_f}}<+\infty.
  \end{eqnarray}
Taking \eqref{LDP2 ESTIMATE1} into \eqref{LDP2 ESTIMATE}, we get that for any $\epsilon\in (0, \epsilon_0]$,
\begin{eqnarray*} \mathbb{E}\Big[\sup_{s\in[0,T]}  \|X^{\psi_\epsilon}_s-  Y^{\psi_\epsilon}_s\|_H^2\Big]
\leq&& \!\!\!\!\!\!\!\!\epsilon^{\frac12}C_{N,\int_0^TF_sds,C_{G_f}}\,\mathbb{E}\Big[  \sup_{s\in[0,T]}\|X^{\psi_\epsilon}_{s}-  Y^{\psi_\epsilon}_{s}\|^2_H    \Big]\nonumber\\
&&\!\!\!\!\!\!\!\!  +2 (\epsilon^{\frac12}+\epsilon)(2C_{\|x\|_H,\int_0^TF_sds,C_{L_f}}+1)\, C_{L_f,\int_0^TF_sds,N,C_{G_f}}.
\end{eqnarray*}
Let $\epsilon_1\in (0, \epsilon_0]$ be such that $\epsilon_1^{\frac12}C_{N,\int_0^TF_sds,C_{G_f}}<\frac{1}{2}$. Then for any $\epsilon\in(0,\epsilon_1]$,
\begin{eqnarray*} \mathbb{E}\Big[\sup_{s\in[0,T]}  \|X^{\psi_\epsilon}_s-  Y^{\psi_\epsilon}_s\|_H^2\Big]
\leq 4 (\epsilon^{\frac12}+\epsilon)(2C_{\|x\|_H,\int_0^TF_sds,C_{L_f}}+1)\, C_{L_f,\int_0^TF_sds,N,C_{G_f}},
\end{eqnarray*}
 which implies \eqref{prop-2-main-eq}.

 The proof of Proposition \ref{Prop LDP2} is complete.
\end{proof}

\end{document}